\numberwithin{equation}{section}
\newtheorem{theorem}{Theorem}[section]
\newtheorem{lemma}{Lemma}[section]
\newtheorem{proposition}{Proposition}[section]
\newtheorem{corollary}{Corollary}[section]
\newtheorem{assumption}{Assumption}[section]
\theoremstyle{definition}
\theoremstyle{remark}
\DeclareMathOperator*{\argmin}{argmin}
\DeclareMathOperator{\trace}{tr}
\DeclareMathOperator{\val}{val}
\DeclareMathOperator{\diag}{diag}
\newcommand{\bv}{\mathbf{v}}
\newcommand{\bx}{\mathbf{x}}
\newcommand{\bg}{\mathbf{g}}
\newcommand{\cJ}{\mathcal{J}}
\newcommand{\real}{\mathbb{R}}
\newcommand{\hardstepname}{standard\xspace}
\title{An optimal variant of Kelley's cutting-plane method\thanks{This research was partially supported by the Israel Science Foundation under ISF grant no. 998-12.}}
\author{Yoel Drori$^\dagger$ \and Marc Teboulle\thanks{School of Mathematical Sciences, Tel-Aviv University, Ramat-Aviv 69978,
Israel ({\tt dyoel@post.tau.ac.il, teboulle@math.tau.ac.il})}}
\date{\today}
\begin{document}
\maketitle

\begin{abstract}
We propose a new variant of Kelley's cutting-plane method
for minimizing a nonsmooth convex Lipschitz-continuous function over the Euclidean space.
We derive the method through a constructive approach and prove that it attains the optimal rate of convergence
for this class of problems.

\paragraph{Keywords} Nonsmooth convex optimization; Kelley's cutting-plane method; Bundle and subgradient methods; Duality; Complexity; Rate of convergence
\end{abstract}

\section{Introduction}
In this paper, we focus on unconstrained nonsmooth convex minimization problems,
where information on the objective can only be gained
through a first-order oracle, which returns the value of the objective and an element in its subgradient at any point in the problem's domain.
Problems of this type often arise in real-life applications
either as the result of a transformation that was applied on a problem
(such as Benders' decomposition~\cite{benders1962partitioning})
or by some inherent property of the problem
(e.g., in an eigenvalue optimization problem).

One of the earliest and most fundamental methods for solving nonsmooth convex problems
is Kelley's cutting plane method (or, the Kelley method, for short),
which was introduced by Kelley in~\cite{kelley1960cutting} and also independently by Cheney and Goldstein~\cite{cheney1959newton}.
The method maintains a polyhedral model of the objective,
and at each iteration
updates this model according to the first-order information
at a point where the model predicts that the objective is minimal.
Despite the elegant and intuitive nature of this method,
the Kelley method suffers from very poor performance, both in practice and in theory~\cite{nemi-yudi-book83}.
The source of the poor performance seems to be the instability of the solution of the subproblems,
where the iterates of the method tend to be far apart and at locations where the accuracy of the model is poor.







The main objective of this work is to present a new method for minimizing
a nonsmooth convex Lipschitz-continuous function over the Euclidean space,
which is surprisingly similar to the Kelley method,
yet attains the optimal rate of convergence for this class of problems.
We derive this method and its rate of convergence through a constructive approach which
further develops the recent framework we introduced in \cite{drori2013performance}.
In the later work, a novel approach was developed to derive new complexity bounds for a
broad class of first order schemes for {\em smooth} convex minimization. The approach
is based on the observation that
the efficiency estimate of a  method can be formulated  
as an optimization problem
and once this is done, it is possible to optimize the parameters of the method to achieve the best possible efficiency estimate
(this can be viewed as some kind of a ``meta-optimization'' approach, where we optimize
the parameters of an optimization method). Very recently, these results were further analyzed  in \cite{kim2014optimized} to derive optimized first-order methods for smooth convex minimization.


Although the main contribution of this work is entirely theoretical, it should be noted that the resulting method also offers
some practical advantages over existing bundle methods.
One of the main advantages is that
the method allows the implementation to choose at each iteration between two types of steps:
a ``\hardstepname'' step, which, as in all bundle methods, requires solving an auxiliary convex optimization program,
and an ``easy'' step which involves only a subgradient step with a predetermined step size.
The efficiency estimate of the method remains valid regardless of the choices a specific implementation makes,
thereby allowing the implementation to find a balance between accuracy and speed
(without performing aggregation on the iterates, which affects the accuracy of the model).

One limitation of the method is that it requires choosing the number of iterations to be performed in advance.
However, this limitation is not severe since
the ``\hardstepname'' steps provide as a by-product
a bound on the worst-case absolute inaccuracy at the \emph{end} of the method's run,
hence once the desired accuracy has been achieved,
the implementation can choose to perform only ``easy'' steps thereby quickly ending the execution of the method.


\paragraph{Literature} The first successful approach for overcoming the instability in the Kelley method,
known as the bundle method, was introduced by Lemar\'echal~\cite{lemarechal1975extension} and
also independently by Wolfe~\cite{wolfe1975method}.
In the bundle approach,
the instability in the Kelley method
is tackled
by introducing a regularizing quadratic term in the objective,
thereby forcing the next iterate to remain in close proximity to the previous iterates, where the model is more accurate.
The bundle approach proved to be very fruitful, and yielded many variations on the idea,
see for instance \cite{auslender1987numerical, kiwiel1990proximity, lemarechal1997variable} and references therein.
The bundle method and its variants also proved to perform very well in practice, however,
a theoretical rate of convergence is not available for most variants,
and for the variants where a rate of convergence was established,
it was shown to be suboptimal~\cite{kiwiel2000efficiency}.



Another fundamental approach
is the level bundle method,
introduced by Lemar\'echal et al.~\cite{lemarechal1995new}.
The idea behind this approach is that the level sets of the polyhedral model of the objective
are ``stable'', and therefore they should be used instead of the complete model.
Building on this idea,
at each iteration the method performs a projection of the previous iterate on a carefully selected level set of the model,
then updates the model according to the first-order information at the resulting point.
Several extensions to the method were proposed, including
a restricted memory variant~\cite{kiwiel1995proximal} and a variant for handling non-Euclidean metrics~\cite{ben2005non}.
The method was shown to possess an optimal rate of convergence,
however, note that the constant factor in the bound is not optimal, and leave room for improvement.



Finally, let us mention that
quite a few additional approaches were proposed.
Among them are
trust-region bundle methods~\cite{schramm1992version}
and the bundle-newton method~\cite{lukvsan1998bundle}, where the objective is approximated by a combination of
polyhedral and quadratic functions.
For a comprehensive survey, 
we refer the reader to~\cite{makela2002survey}.

\paragraph{Outline of the paper.} The paper is organized as follows.
In Section~\ref{S:kelleyvariant}, we present the new Kelley-Like Method (KLM), 
and state our main result: an optimal rate of convergence (Theorem~\ref{T:main}).
The motivation for the method and our approach is described in Section~\ref{S:approach}.
In Sections~\ref{S:tracub}--\ref{S:proofmt}, we
provide a detailed description of the construction of the proposed method and prove its rate of convergence.
Section~\ref{S:numerical}  numerically illustrates the potential benefit of the standard steps by comparing 
between the different variants of the method.
We conclude the main body of the work, in Section~\ref{S:concluding}, where we discuss some additional cases where the approach presented here is applicable.
Finally, in Appendix~\ref{S:appendix}, we give a new lower-complexity bound for the
class of convex and Lipschitz-continuous minimization problems, which shows
that the KLM attains the best possible rate of convergence for this class of problems.

\paragraph{Notation.} For a convex function $f$, its subgradient at $x$ is denoted by
$\partial f(x)$ and we use $f'(x)$ to denote some element in $\partial f(x)$.
We also denote $f^\ast = \min_x f(x)$ and $x^\ast =x^\ast_f\in \argmin_x f(x)$.
The Euclidean norm of a vector $x$ is denoted as $\|x\|$.
We use $e_i$ for the $i$-th canonical basis vector, which consists of all zero components,
except for its $i$-th entry which is equal to one. For an optimization problem $(P)$, $\val(P)$
stands for its optimal value.
For a symmetric
matrix $A$, $A\succeq 0$ means $A$ is
positive semidefinite (PSD).

To simplify some expressions, we often write $A \succeq 0$ for
a non-symmetric matrix $A$: this should be interpreted as $\frac{1}{2}(A+A^T)\succeq 0$.

\section{The Algorithm and its Rate of Convergence}\label{S:kelleyvariant}

In this section we present our main results, namely the new proposed algorithm and its rate of convergence.

\subsection{The Algorithm: a Kelley-Like Method (KLM)}\label{SS:klm}
Consider the minimization problem $\min\{f(x):x\in\mathbb{R}^p\}$,
where $f$ is $f\in C_L(\mathbb{R}^p)$ (i.e., $f:\mathbb{R}^p\rightarrow \mathbb{R}$ is Lipschitz-continuous with constant $L>0$)
and convex.
The method described below assumes that
$x^\ast \in \argmin_x f(x)$
is located inside a ball of radius $R>0$ around a given point $x_0\in\mathbb{R}^p$
and requires knowing in advance the number of iterations to be performed, $N$.
The method proceeds as follows:

\begin{oframed}
\noindent\textbf{Algorithm KLM}
\paragraph{Initialization:}
(The zeroth iteration.)
Set
\begin{align*}
	x_1:= x_0, \
	s := 0,\
	\tau := 1,\ \text{and }
	\mu := \frac{R}{L\sqrt{N}}.
\end{align*}


\paragraph{Iteration \#$M$:} At the $M$th iteration ($1\leq M\leq N-1$), the method \emph{arbitrarily} chooses between two types of steps:

In the first type (the ``\hardstepname step''), we set $m\in \argmin_{1\leq i\leq M} f(x_i)$ and solve
\begin{align*}
	    (B_M)\quad \max_{y\in \mathbb{R}^p, \zeta,t\in \mathbb{R}} &\ f(x_m)-t \\
		\text{s.t. }
			& f(x_i)+\langle y-x_i,f'(x_i)\rangle\leq t, \quad i=1,\dots,M,\\
			& f(x_m)-L\zeta\leq t, \\
			& \|y-x_0\|^2+(N-M)\zeta^2\leq R^2.
\end{align*}
Let $y^\ast$, $\zeta^\ast$ and $t^\ast$ be
an optimal solution to the primal variables of problem $(B_M)$,
and let $\beta^\ast$ be the optimal dual multiplier that corresponds to the constraint
$f(x_m)-L\zeta\leq t$.
The step then proceeds by setting
\[
	\text{(\hardstepname step)}\quad x_{M+1}:=y^\ast,
\]
and updating
\begin{align*}
	s := M,\
	\tau := \beta^\ast,\
	\mu := \frac{\zeta^\ast}{L}.
\end{align*}

The second type of step (the ``easy step'') is a subgradient step with
the previously selected step size $\mu$:
\[
	\text{(easy step)}\quad x_{M+1} := x_M - \mu f'(x_M).
\]

\paragraph{Output:} The output 
is given by a convex combination of the best step from the first $s$ steps
and the ergodic combination of the last $N-s$ steps:
\[
	\bar x_N := (1- \tau) x_m  + \frac{\tau}{N-s} \sum_{j=s+1}^{N} x_j ,
\]
here $m\in \argmin_{1\leq i\leq s} f(x_i)$.
\end{oframed}

Note that if the method chooses to perform an ``easy'' step at every iteration,
it simply reduces to the subgradient method with a constant step size.
Also note that the ``\hardstepname'' step shares the computational simplicity of the
main step in the Kelley method (cf.\ next section), where the two iteration rules
differ only in the introduction of the optimization variable $\zeta$ and in the inclusion of the second constraint in $(B_M)$.

\subsection{An Optimal Rate of Convergence for KLM}

We now state the 
efficiency estimate of the method,
which shows that the new method is optimal for the class of nonsmooth minimization with convex and Lipschitz-continuous
functions (see~Appendix~\ref{S:appendix} and also \cite{nemi-yudi-book83, nest-book-04}).

\begin{theorem}\label{T:main}
Suppose $\bar x_N$ was generated by Algorithm~KLM, then $f(\bar x_N)-f^\ast \leq \frac{LR}{\sqrt{N}}$.
Furthermore, suppose the method performed ``\hardstepname'' steps at iterations $s_1,\dots, s_k$,
where $s_1<\dots<s_k$,
then
\begin{equation}\label{E:minbound}
	f(\bar x_N)-f^\ast \leq \val (B_{s_k})\leq \dots \leq \val (B_{s_1}) \leq \frac{LR}{\sqrt{N}}.
\end{equation}
\end{theorem}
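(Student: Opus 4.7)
The chain \eqref{E:minbound} naturally decomposes into three sub-claims: (a) $f(\bar x_N)-f^\ast\leq\val(B_{s_k})$, (b) the monotonicity $\val(B_{s_{j+1}})\leq\val(B_{s_j})$ for $j=1,\dots,k-1$, and (c) $\val(B_{s_1})\leq LR/\sqrt{N}$. Chaining these yields the unconditional rate $f(\bar x_N)-f^\ast\leq LR/\sqrt{N}$, while the degenerate case $k=0$ (no \hardstepname step at all) must be handled separately: the algorithm then reduces to a subgradient method with the constant step size $\mu=R/(L\sqrt{N})$, and the classical analysis immediately delivers the desired bound.

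The heart of the argument is (a). My plan is to apply the familiar subgradient-method telescoping of $\|x_{j+1}-x^\ast\|^2-\|x_j-x^\ast\|^2$ over the easy steps $j=s_k+1,\dots,N$, using $x_{s_k+1}=y^\ast$ and $\mu=\zeta^\ast/L$. Combined with convexity of $f$ and the Lipschitz bound $\|f'(x_j)\|\leq L$, this yields
\[
    f\Bigl(\tfrac{1}{N-s_k}\sum_{j=s_k+1}^{N}x_j\Bigr)-f^\ast \leq \frac{\|y^\ast-x^\ast\|^2}{2(N-s_k)\mu}+\frac{\mu L^2}{2}.
\]
Taking a convex combination of this with the trivial bound on $f(x_m)-f^\ast$, weighted by $\beta^\ast$ and $1-\beta^\ast$ so that by convexity the output $\bar x_N$ appears on the left, and then invoking the KKT optimality of $(y^\ast,\zeta^\ast,t^\ast,\beta^\ast)$ in $(B_{s_k})$---together with the ball constraint at optimum $\|y^\ast-x_0\|^2+(N-s_k)(\zeta^\ast)^2\leq R^2$ and $\|x^\ast-x_0\|\leq R$---should collapse the right-hand side to $f(x_m)-t^\ast=\val(B_{s_k})$. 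Stationarity in $t$ inside $(B_{s_k})$ forces $\beta^\ast\in[0,1]$, legitimizing the convex combination.

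For (b), the idea is to construct, from optimal primal/dual data of $(B_{s_{j+1}})$, a feasible solution of $(B_{s_j})$ with at least the same objective value, exploiting that the subgradient inequalities of $(B_{s_j})$ form a proper subset of those of $(B_{s_{j+1}})$ and that the ball constraint at $s_j$ has a larger slack; the change in the minimizing index $m$ and in the coefficient $f(x_m)$ appearing in both the objective and the Lipschitz constraint will require some care, and a duality-based route seems more natural than a direct primal one. For (c), a simple dual-feasible construction with $\zeta=R/\sqrt{N}$---which matches the initialization $\mu=R/(L\sqrt{N})$ as the balancing choice between the two terms $\|x^\ast-x_0\|^2/(N\mu)$ and $\mu L^2$ arising from the subgradient analysis---combined with Lipschitzness of $f$ should yield $\val(B_{s_1})\leq LR/\sqrt{N}$. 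The principal technical obstacle will be step (a), specifically calibrating the subgradient-method estimate and the dual weight $\beta^\ast$ so that the resulting inequality is exactly $\val(B_{s_k})$ rather than a loose upper estimate of it; once this is in hand, (b) and (c) should reduce to largely algebraic feasibility verifications.
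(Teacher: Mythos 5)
Your decomposition is reasonable, and sub-claim (a) can indeed be pushed through essentially as you sketch: averaging the subgradient-descent telescoping over $x_{s_k+1}=y^\ast,\dots,x_N$ with $\mu=\zeta^\ast/L$, combining it with the subgradient inequalities at $x_1,\dots,x_{s_k}$ evaluated at $x^\ast$ and weighted by the dual multipliers $b_i^\ast$ of the cut constraints (this is the point where ``the trivial bound on $f(x_m)-f^\ast$'' is misleading --- no trivial bound exists; you need the full dual vector, with $\sum_{i\le s_k}b_i^\ast=1-\beta^\ast$), and then using $y^\ast=x_0-\frac{1}{2d^\ast}\sum_i b_i^\ast g_i$, $\zeta^\ast=L\beta^\ast/(2(N-s_k)d^\ast)$ and $\|x^\ast-x_0\|\le R$, the right-hand side collapses exactly to the dual value of $(B_{s_k})$. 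That is a more self-contained certificate than the paper's, which obtains (a) from the relaxation chain $f(\bar x_N)-f^\ast\le P_{s_k}(\bar h)\le \val(P_{s_k}^{II})=\val(B_{s_k})$.

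The genuine gap is in (b) and (c): these are \emph{not} properties of the subproblems $(B_M)$ alone, and they are false for arbitrary trial-point configurations, so no ``algebraic feasibility verification'' between $(B_{s_j})$ and $(B_{s_{j+1}})$ can prove them. Concretely, if the first $N-1$ trial points all sat at $x_0$ with a common subgradient $g$, $\|g\|=L$, then taking $y-x_0=-(R/\sqrt{2})\,g/L$ and $\zeta=R/\sqrt{2}$ in $(B_{N-1})$ (whose ball constraint is $\|y-x_0\|^2+\zeta^2\le R^2$) shows $\val(B_{N-1})\ge LR/\sqrt{2}>LR/\sqrt{N}$; so both the monotonicity and $\val(B_{s_1})\le LR/\sqrt{N}$ depend crucially on the fact that the iterates between standard steps (and before the first one) are exactly the optimal continuation prescribed by the previously solved subproblem --- $x_{s_j+1}=y^\ast$ followed by subgradient steps with $\mu=\zeta^\ast/L$, respectively plain subgradient steps with $\mu=R/(L\sqrt{N})$ from $x_1=x_0$ --- together with the fact that the new cuts come from the true convex $L$-Lipschitz $f$. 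The paper packages precisely this in Lemma~\ref{L:descent}: the worst-case certificate $(\hat X,\hat\delta)$ of $(P^{II}_{s_{j+1}})$ is pulled back to a feasible certificate of $(P^{II}_{s_j})$ evaluated at the outer-optimal step sizes $\bar h$, and outer-optimality of $\bar h$ (i.e., how the intermediate iterates were generated) is what gives the inequality the right direction; your plan never uses this information. Two further directional slips reinforce the problem: the ball constraint of $(B_{s_j})$ is \emph{tighter}, not slacker, than that of $(B_{s_{j+1}})$ (its $\zeta^2$-coefficient $N-s_j$ is larger), so optimal points do not transfer by containment; and in (c), $\zeta$ is a primal variable of the maximization $(B_{s_1})$, so exhibiting a feasible $\zeta=R/\sqrt{N}$ can only give a lower bound on $\val(B_{s_1})$ --- an upper bound requires a dual point $(b,\beta)\in\Delta_{s_1+1}$ whose value is controlled using the trajectory structure just described.
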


Note that although the rate of convergence is of same order as for
the level bundle method~\cite{lemarechal1995new}, which to the best of our knowledge has the best known efficiency estimate for a bundle method,
the constant term here is smaller by a factor of two. Hence, the proposed method requires a quarter of the steps
in order to the reach the same worst-case absolute inaccuracy.

The rest of this paper is devoted to the detailed construction of the proposed Algorithm KLM
and to the proof of Theorem~\ref{T:main}.


\section{Motivation}\label{S:approach}

\subsection{A New Look at the Kelley Method}

Consider the problem
\[
	\min_{x\in \mathbb{R}^p} f(x),
\]
where $f(x)$ is convex, nonsmooth, and Lipschitz-continuous with constant $L$.
For a given set of trial points, $\cJ_M:=\{(x_j,f(x_j),f'(x_j))\}_{j=1}^M$,
denote by $f_M(x)$ the polyhedral model of the function $f$, defined by
\begin{equation}\label{D:fmax}
	f_M(x) = \max\{ f(x_j)+\langle f'(x_j),x-x_j\rangle \mid 1\leq j \leq M\}.
\end{equation}

Assuming that $x^\ast_f \in \argmin_x f(x)$ lies inside a compact set,
which we take here as
$\{x:\|x - x_0\|\leq R\}$ for some $x_0\in \mathbb{R}^p$ and $R>0$,
the Kelley method chooses the next iterate, $x_{M+1}$, by solving
\[
	(\text{Kelley})\quad x_{M+1}\in\argmin_{\|x-x_0\|\leq R} f_M(x).
\]
Alternatively, we can write the previous rule as
the following functional optimization problem:
\begin{align*}
	(\text{Kelley}')\quad x_{M+1}  \in \argmin_{\|x-x_0\|\leq R}\ &\min_{\varphi\in C_L, \varphi \text{ is convex}} \ \varphi(x) \\
	\text{s.t. }
		& \varphi(x_i)=f(x_i), \quad i=1,\dots,M,\\
		& f'(x_i)\in \partial \varphi(x_i), \quad i=1,\dots,M, \\
		& \|x^\ast_\varphi-x_0\| \leq R,
\end{align*}
where the two formulations are equivalent since the solution to the inner minimization problem reduces
exactly to $f_M$ inside the ball $\|x-x_0\|\leq R$.

The well-known inefficient nature of the method is now apparent:
the method chooses the next iterate as one that minimizes the \emph{best-case function value},
which is not a natural strategy when we are interested in obtaining a bound on
the \emph{worst-case absolute inaccuracy}, $f(x_{M+1})-f^\ast$. This motivates us to consider the following alternative strategy.


\subsection{The Proposed Approach}
Since we are interested in deriving a bound on the worst-case behavior of the absolute inaccuracy,
a natural approach,
given a set of trial points, $\cJ_M:=\{(x_j,f(x_j),f'(x_j))\}_{j=1}^M$,
might be to choose the next iterate in a way that the worst-case absolute inaccuracy is minimized, i.e.,
\begin{align*}
	x_{M+1} \in \argmin_{x\in \mathbb{R}^p} \max_{\varphi\in C_L, \varphi \text{ is convex}} &\ \varphi(x)-\varphi^\ast \\
	\text{s.t. }
		& \varphi(x_i)=f(x_i), \quad i=1,\dots,M,\\
		& f'(x_i)\in \partial \varphi(x_i), \quad i=1,\dots,M, \\
		& \|x^\ast_\varphi-x_0\| \leq R.
\end{align*}
It appears, however, that this \emph{greedy} approach
forces the resulting iterates to be too conservative.
In fact, numerical tests show that in some cases the sequence generated by this
approach does not even converge to a minimizer of $f$!

We therefore
take a \emph{global} approach and attempt to minimize a bound on the worst-case behavior of the entire sequence,
i.e., instead of choosing only the next iterate $x_{M+1}$,
given some $N>M$,
we look for a sequence $x_{M+1},\dots,x_N$ for which the absolute inaccuracy at the last iterate, $x_N$, is minimized.
In order to accomplish this, we need to
assume  some form of structure on
the sequence $\{x_1,\dots,x_N\}$.

Let $\{v_1,\dots,v_r\}$ be an orthonormal set that spans $\{f'(x_1),\dots,f'(x_{M}), x_1-x_0,\dots, x_M-x_0\}$.
Hereafter, we
consider
sequences $x_{M+1},\dots,x_N$ that are generated
according to a first-order method of the form
\begin{equation}\label{E:mainalg}
	x_{i} = x_0 + \sum_{k=1}^{i-1} h^{(i)}_{1,k} (x_k-x_0) - \sum_{k=1}^{r} h^{(i)}_{2,k} v_k - \sum_{k=M+1}^{i-1} h^{(i)}_{3,k} f^\prime(x_k), \quad i=M+1,\dots,N,
\end{equation}
for step sizes $h^{(i)}_{j,k}\in\mathbb{R}$ that depend only on the data available at the current stage (i.e., $L$, $R$  and $\cJ_M$).
Note that the
first summation
is redundant here
and can be expressed using the other terms,
however, including it
will significantly simplify the following analysis.

For sequences of this form,
given $h=(h^{(i)}_{j,k})$,
the worst-case absolute inaccuracy at $x_N$ is, by definition, the solution to
\begin{align*}
	P_M(h):= \max_{\varphi\in C_L, \varphi \text{ is convex}} \ &\varphi(x_N)-\varphi^\ast \\[-5pt]
	\text{s.t. }
		& x_{i} = x_0 + \sum_{k=1}^{i-1} h^{(i)}_{1,k} (x_k-x_0) - \sum_{k=1}^{r} h^{(i)}_{2,k} v_k - \sum_{k=M+1}^{i-1} h^{(i)}_{3,k} \varphi^\prime(x_k), \\&\qquad\qquad i=M+1,\dots,N,\\
		& \varphi(x_i)=f(x_i), \quad i=1,\dots,M,\\
		& f'(x_i)\in \partial \varphi(x_i), \quad i=1,\dots,M, \\
		& \|x^\ast_\varphi-x_0\| \leq R.
\end{align*}
Therefore, the problem of finding step sizes $h$
such that the worst-case absolute inaccuracy at $x_N$ is minimized
can be expressed by
\[
	(P_M) \quad \min_{h} P_M(h).
\]

Note that obtaining an optimal solution for $(P_M)$ is not necessary.
Indeed, suppose that for any $h$ we can find a (preferably easy) upper bound $Q_M(h)$ for $P_M(h)$,
then it follows that
\[
	f(x_N)-f^\ast\leq P_M(h)\leq Q_M(h),
\]
hence a method with a ``good'' worst-case absolute inaccuracy might be found by minimizing $Q_M(h)$ with respect to $h$ instead of $P_M(h)$.
The analysis developed in the forthcoming two sections show how to achieve this, and  serves two main goals:
\begin{itemize}
\item Derive a tractable upper-bound for the worst-case absolute inaccuracy expressed via problem $(P_M)$.
\item Show that the derivation of this bound leads itself to the construction of Algorithm~KLM.
\end{itemize}

\section{A Tractable Upper-Bound for $(P_M)$}\label{S:tracub}

Problem $(P_M(h))$ (and hence problem $(P_M)$) is a difficult abstract optimization problem in infinite dimension through the functional
constraint on $\varphi$.
Inspired by the approach developed in~\cite{drori2013performance},
we start the reformulation of the problem by deriving a finite dimensional relaxation (\S\ref{S:finitedimrelax}),
followed by
an SDP relaxation
for the inner maximization problem (\S\ref{S:sdprelaxation}).
We then consider the resulting minimax problem and show how,
using duality, linearization, and the matrix completion theorem,
it can be transformed into a tractable problem (\S\ref{S:takingdual}--\ref{S:tightconvex}).

\subsection{A Finite Dimensional Relaxation of $(P_M)$}\label{S:finitedimrelax}

To relax $(P_M)$ into a finite dimensional problem, we need to tackle the constraint
``$\varphi\in C_L,\ \varphi \text{ is convex}$'',
which states that for all $u, v \in \real^p$
\begin{flalign}
\mbox{[subgradient inequality]}&& \varphi(v) -\varphi(u) \leq & \langle \varphi'(v), v-u\rangle,&\quad   \label{subgrad}\\
\mbox{[Lipschitz continuity]}&&  \|\varphi'(u) \| \leq & L,&\quad \label{lip}
\end{flalign}
where $\varphi'(v)$ is an element of $\partial \varphi(v)$. 
For that purpose, we introduce the variables
\begin{align*}
	& \textstyle x_\ast \in \argmin_x \varphi(x), \\
	& \delta_i = \varphi(x_i), \quad i=M+1,\dots,N,\ast,\\
	& g_i \in \partial \varphi(x_i), \quad i=M+1,\dots,N,\ast,
\end{align*}
and for ease of notation, we set
\begin{align*}
	& \delta_j = f(x_j), \quad j=1,\dots,M, \\
	& g_j = f'(x_j), \quad j=1,\dots,M.
\end{align*}
We now relax $P_M(h)$ by replacing the function variable $\varphi$ with the new variables
and by introducing constraints that follow from the application of the
subgradient inequality \eqref{subgrad} and the Lipschitz-continuity of $\varphi$ \eqref{lip}
at the points $x_1,\dots,x_N,x_\ast$,
reaching
the following minimax problem
in finite dimension:
\begin{align*}
    &\max_{\substack{g_{M+1},\dots,g_N,g_\ast,x_\ast\in\mathbb{R}^p,\\\delta_{M+1},\dots,\delta_N,\delta_\ast \in\mathbb{R}}}\ \delta_N-\delta_\ast \\
    \text{s.t. }
			& x_{i} = x_0 + \sum_{k=1}^{i-1} h^{(i)}_{1,k} (x_k-x_0) - \sum_{k=1}^{r} h^{(i)}_{2,k} v_k - \sum_{k=M+1}^{i-1} h^{(i)}_{3,k} g_k, \quad i=M+1,\dots,N,\\
			& \delta_i -\delta_j \leq  \langle g_i, x_i-x_j \rangle, \quad i,j=1,\dots,N,\ast, \\
			& \|g_i\|^2 \leq L^2, \quad i=1,\dots,N,\ast\\
			& \|x_\ast-x_0\|^2 \leq R^2.
\end{align*}

Recall that $\delta_j, g_j$ and $x_j$, $j=1,\dots,M$, are given in advance (these are the trial points) and
are considered as the problem's data. 

It appears that this problem (which clearly is not convex) remains nontrivial to tackle. We therefore consider a relaxation obtained by removing some constraints:
\begin{align*}
    P_M^I(h):= &\max_{\substack{g_{M+1},\dots,g_N,x_\ast\in\mathbb{R}^p,\\\delta_{M+1},\dots,\delta_N,\delta_\ast \in\mathbb{R}}}\ \delta_N-\delta_\ast \\
    \text{s.t. }
			& x_{i} = x_0 + \sum_{k=1}^{i-1} h^{(i)}_{1,k} (x_k-x_0) - \sum_{k=1}^{r} h^{(i)}_{2,k} v_k - \sum_{k=M+1}^{i-1} h^{(i)}_{3,k} g_k, \quad i=M+1,\dots,N,\\
			& \delta_i -\delta_j \leq  \langle g_i, x_i-x_j \rangle, \quad i=M+1,\dots,N,\ j=1,\dots,i-1, \\
			& \delta_i - \delta_\ast \leq  \langle g_i, x_i-x_\ast \rangle, \quad i=1,\dots,N,\\
			& \|g_i\|^2 \leq L^2, \quad i=M+1,\dots,N,\\
			& \|x_\ast-x_0\|^2 \leq R^2.
\end{align*}

As before, we denote the problem $\min_h P_M^I(h)$ by $(P_M^I)$, and we have
$$
\val(P_M) = \min_{h} P_M(h) \leq \min_{h} P_M^I(h)=\val(P_M^I).
$$
Our first main objective is now to derive a tractable convex minimization problem which is an upper-bound for the minimax problem $(P^I_M)$.
The first step in that direction is the derivation of a semidefinite programming relaxation of the inner maximization problem $P_M^I(h)$.
At this juncture, the reader might naturally be wondering why we do not derive directly a dual problem of the inner maximization
to reduce our minimax problem to a minimization problem. It turns out that the SDP relaxation derived below enjoys
a fundamental monotonicity property (see Lemma~\ref{L:descent}),
which will play a crucial role in the proof of the main complexity result Theorem~\ref{T:main}.


\subsection{Relaxing The Inner Maximization Problem to an SDP}\label{S:sdprelaxation}
We proceed by performing a semidefinite relaxation on $P_M^I(h)$.
Let $X\in \mathbb{S}^{1+r+N-M}$ be 
\[
	X = \footnotesize
	\begin{pmatrix}
		\langle x_\ast-x_0,x_\ast-x_0\rangle	&	\langle x_\ast-x_0,v_1\rangle	&	\cdots	&	\langle x_\ast-x_0,v_r\rangle	&	\langle x_\ast-x_0,g_{M+1}\rangle	&	\cdots	&	\langle x_\ast-x_0,g_N\rangle	\\
		\langle v_1, x_\ast-x_0\rangle			&	\langle v_1,v_1\rangle			&	\cdots	&	\langle v_1,v_r\rangle &	\langle v_1,g_{M+1}\rangle	&	\cdots	&	\langle v_1,g_N\rangle \\
		\vdots									&	\vdots							&	\ddots	&	\vdots					&	\vdots						&	\ddots	&	\vdots				\\
		\langle v_r, x_\ast-x_0\rangle			& 	\langle v_r, v_1\rangle			&	\cdots	&	\langle v_r,v_r\rangle	&	\langle v_r,g_{M+1}\rangle	&	\cdots	&	\langle v_r,g_N\rangle \\
		\langle g_{M+1},x_\ast-x_0\rangle			&	\langle g_{M+1}, v_1\rangle		&	\dots	&	\langle g_{M+1}, v_r\rangle		&	\langle g_{M+1},g_{M+1} \rangle	&		\cdots	&	\langle g_{M+1},g_N\rangle	\\
		\vdots							&		\vdots					& \ddots	&	\vdots	&	\vdots						&	\ddots								&	\vdots		\\
		\langle g_N,x_\ast-x_0\rangle			&	\langle g_{N}, v_1\rangle		&	\dots	&	\langle g_N, v_r\rangle		&	\langle g_N,g_{M+1}\rangle	& \cdots 	&	\langle g_N,g_N\rangle	
	\end{pmatrix},
\]
and let $\bv_i, \bg_i, \bx_i\in \mathbb{R}^{1+r+N-M}$ be such that
\begin{equation}\label{D:boldletters}
\begin{aligned}
	& \bv_i = e_{1+i}, \quad i=1,\dots,r,\\
	& \bg_i = \begin{cases} \sum_{k=1}^r \langle g_i, v_k\rangle \bv_k, & i=1,\dots,M,\\
							e_{1+r+i-M}^T,  & i=M+1,\dots,N, \end{cases}\\
	& \bx_i = \begin{cases} \sum_{k=1}^r \langle x_i-x_0, v_k\rangle \bv_k, &i=1,\dots,M, \\
							\sum_{k=1}^{i-1} h^{(i)}_{1,k} \bx_k - \sum_{k=1}^{r} h^{(i)}_{2,k} \bv_k - \sum_{k=M+1}^{i-1} h^{(i)}_{3,k} \bg_k, &i=M+1,\dots,N,\\
							e_1,				& i =\ast,
							\end{cases}
\end{aligned}
\end{equation}
then it is straightforward to verify that the following identities hold
\begin{equation}\label{E:identities}
\begin{aligned}
	& \bv_i^T X \bv_j = \langle v_i, v_j\rangle, \quad i, j=1,\dots,r,\\
	& \bg_i^T X \bg_j  = \langle g_i, g_j\rangle, \quad i,j=1,\dots,N, \\
	& \bg_i^T X \bx_j = \langle g_i, x_j-x_0\rangle, \quad i=1,\dots,N,\ j=1,\dots,N,*, \\
	& \bx_i^T X \bx_j = \langle x_i-x_0, x_j-x_0\rangle, \quad i,j=1,\dots,N,*.
\end{aligned}
\end{equation}
Now, by using \eqref{E:identities} in $P_M^I(h)$
and by relaxing the definition of $X$ to $\bv_i^T X \bv_j = \langle v_i, v_j\rangle$ and $X\succeq 0$,
we reach the following SDP:
\begin{align*}
    P_M^{II}(h):= \max_{\substack{X\in\mathbb{S}^{1+r+N-M}, \\\delta_{i},\delta_\ast \in\mathbb{R}}}\ & \delta_{N}-\delta_\ast\\
    \text{s.t. }\quad
			& \delta_{i} -\delta_j\leq  \bg_i^T X (\bx_i-\bx_j), \quad i=M+1,\dots,N,\ j=1,\dots,i-1,\\
			& \delta_i -\delta_\ast\leq  \bg_i^T X (\bx_i-\bx_\ast), \quad i=1,\dots,N,\\
			& \bg_i^T X \bg_i \leq L^2, \quad i=M+1,\dots,N,\\
			& \bx_\ast^T X \bx_\ast \leq R^2,\\
			& \bv_i^T X \bv_j = \langle v_i, v_j\rangle, \quad i, j=1,\dots,r,\\
 			& X\succeq 0.
\end{align*}

Again, we define
\[
	(P_M^{II}) \quad \min_h P_M^{II}(h),
\]
and we have
\[
\val(P_M) \leq \val(P_M^I) \leq \val(P_M^{II}).
\]
Note that $P_M^{II}(h)$ depends on the value of $h$ through the vectors $\bx_i$,
therefore $(P_M^{II})$ involves bilinear terms in its optimization variables.

\subsection{Transforming the Minimax SDP to a Minimization Problem}\label{S:takingdual}

To transform the minimax problem $(P_M^{II})$ into a minimization problem, we use duality. More precisely, as shown below, by replacing
$P_M^{II}(h)$ with its Lagrangian-dual, we reach a nonconvex (bilinear) semidefinite minimization problem whose optimal value coincides with that of $(P_M^{II})$.

\begin{lemma}\label{L:bilsdp} Let $P_M^{III}(h)$ be defined by
\begin{align*}
    P_M^{III}(h) := \min_{a, b, c,d, \Phi} &\ \sum_{i=M+1}^N \sum_{j=1}^M a_{i,j} \delta_j+\sum_{i=1}^M b_i (\langle g_i, x_i-x_0\rangle-\delta_i)+L^2\! \sum_{i=M+1}^N c_i + R^2 d + \sum_{i=1}^r \Phi_{i,i}\\
    \text{s.t.}\quad
			& -\sum_{i=M+1}^{N}\left(\sum_{j=1}^{i-1} a_{i,j} (\bx_i-\bx_j) + b_i \bx_i\right) \bg_i^T  +\sum_{i=1}^{N} b_i \bx_\ast \bg_i^T \\&\qquad +\sum_{i=M+1}^N c_i \bg_i \bg_i^T + d \bx_\ast \bx_\ast^T + \sum_{i, j=1}^r \Phi_{i,j} \bv_i \bv_j^T \succeq 0, \\
			&\ (a,b)\in \Lambda,\ a_{i,j}\geq 0,\ b_i\geq 0,\ c_i\geq 0,\ d\geq 0,
\end{align*}
where $$ \Lambda =\{(a,b):\;  \sum_{j=1}^{N-1} a_{N,j} + b_N = 1,\; \sum_{j=1}^N b_j = 1,\; \sum_{j=i+1}^N a_{j,i}- \sum_{j=1}^{i-1} a_{i,j}=b_i, \quad i=M+1,\dots,N-1\}.$$
Then $P^{II}_M(h)=P^{III}_M(h)$ for all $h$.
\end{lemma}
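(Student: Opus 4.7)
The identity $P_M^{II}(h)=P_M^{III}(h)$ is the strong-duality statement for the semidefinite program $P_M^{II}(h)$, so the plan is to form its Lagrangian dual explicitly, compare with $P_M^{III}(h)$, and invoke Slater's condition. I would attach multipliers $a_{i,j}\ge 0$ to the subgradient-type constraints $\delta_i-\delta_j\le \bg_i^T X(\bx_i-\bx_j)$, $b_i\ge 0$ to the gap constraints $\delta_i-\delta_\ast\le \bg_i^T X(\bx_i-\bx_\ast)$, $c_i\ge 0$ to the Lipschitz constraints, $d\ge 0$ to the radius constraint, and an unrestricted matrix $\Phi\in\real^{r\times r}$ to the equality block $\bv_i^T X\bv_j=\langle v_i,v_j\rangle$. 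Since $\{v_1,\dots,v_r\}$ is orthonormal, the constant contributed to the Lagrangian by the equality block collapses to $\sum_{i=1}^r\Phi_{i,i}$.

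Next I would maximize the Lagrangian over the primal variables $\delta_{M+1},\dots,\delta_N,\delta_\ast$ (unrestricted scalars) and over $X\succeq 0$. Requiring each scalar maximum to be finite forces the coefficients of $\delta_N$, of $\delta_\ast$, and of $\delta_i$ for $i=M+1,\dots,N-1$ to vanish; the three resulting equations are precisely the linear relations defining $\Lambda$. Requiring $\max_{X\succeq 0}$ to be finite forces the symmetric part of the $X$-coefficient to be negative semidefinite, which in the paper's convention is exactly the semidefinite inequality displayed in $P_M^{III}(h)$. Collecting the remaining constants together with the contributions coming from the fixed data $\delta_j=f(x_j)$ for $j\le M$ (which contribute through $+\sum a_{i,j}\delta_j$ and $-\sum b_i\delta_i$ after expanding the constraint terms) yields a dual objective of the form $\sum_{i=M+1}^N\sum_{j=1}^M a_{i,j}\delta_j-\sum_{i=1}^M b_i\delta_i+L^2\sum_i c_i+R^2 d+\sum_i\Phi_{i,i}$.

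The principal bookkeeping step is a harmless reparametrization of $\Phi$. For $i=1,\dots,M$, the definitions~\eqref{D:boldletters} place both $\bg_i$ and $\bx_i$ inside the span of $\bv_1,\dots,\bv_r$, so the rank-one blocks $b_i\bx_i\bg_i^T$ that arise in the raw PSD inequality are themselves linear combinations of $\{\bv_k\bv_l^T\}$ and can be absorbed into the free matrix $\Phi$ by the shift $\Phi_{k,l}\mapsto \Phi_{k,l}-\sum_{i=1}^M b_i\langle x_i-x_0,v_k\rangle\langle g_i,v_l\rangle$. This eliminates the $i\le M$ contributions from the matrix inequality, and since $\sum_k\langle x_i-x_0,v_k\rangle\langle g_i,v_k\rangle=\langle g_i,x_i-x_0\rangle$ by the orthonormal expansion, the trace of $\Phi$ shifts by precisely $\sum_{i=1}^M b_i\langle g_i,x_i-x_0\rangle$, producing the extra term appearing in the dual objective of $P_M^{III}(h)$. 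Weak duality now gives $P_M^{II}(h)\le P_M^{III}(h)$. To upgrade this to an equality I would check Slater's condition for $P_M^{II}(h)$: taking $X$ equal to the identity (so the equality constraints on the $\bv$-block are met automatically and $X\succ 0$) together with $\delta_\ast$ sufficiently negative and $\delta_{M+1},\dots,\delta_N$ moderate, and mildly rescaling off-block diagonal entries if needed, makes every inequality strict, so standard SDP strong duality gives $P_M^{II}(h)=P_M^{III}(h)$. The only subtle point in the argument is the $\Phi$-absorption step; everything else is routine duality.
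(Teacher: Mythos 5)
Your derivation follows the same route as the paper: identical multipliers, the equality block collapsing to $\sum_{i=1}^r\Phi_{i,i}$ by orthonormality, the vanishing coefficients of $\delta_{M+1},\dots,\delta_N,\delta_\ast$ giving exactly $\Lambda$, and $\max_{X\succeq 0}\trace(XW)$ forcing the PSD condition. Your explicit reparametrization of $\Phi$ is precisely the bookkeeping the paper does implicitly by declaring $\bg_i^TX\bx_i=\langle g_i,x_i-x_0\rangle$ ``fixed'' for $i\le M$: since $\bg_i,\bx_i$ lie in the span of $\bv_1,\dots,\bv_r$ for $i\le M$, the blocks $b_i\bx_i\bg_i^T$ can be folded into the free multiplier of the equality block, and the trace changes by $-\sum_{i\le M}b_i\langle g_i,x_i-x_0\rangle$, which is exactly compensated by the explicit term $+\sum_{i=1}^M b_i\langle g_i,x_i-x_0\rangle$ in the $P_M^{III}(h)$ objective (your sentence about the trace shift has the sign backwards, but the net accounting you describe is correct).

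The one step that fails as written is your Slater point for $P_M^{II}(h)$. The constraints $\delta_i-\delta_\ast\le\bg_i^TX(\bx_i-\bx_\ast)$, $i=1,\dots,N$, bound $\delta_\ast$ from \emph{below} (and for $i\le M$ the left-hand $\delta_i=f(x_i)$ is fixed data), so taking ``$\delta_\ast$ sufficiently negative'' makes them \emph{violated}, not strict; you need $\delta_\ast$ sufficiently \emph{large}, with $\delta_{M+1},\dots,\delta_N$ sufficiently negative, and a diagonal $X\succ0$ whose entries outside the $\bv$-block are small enough that $\bg_i^TX\bg_i<L^2$ and $\bx_\ast^TX\bx_\ast<R^2$ (the plain identity also needs $L,R>1$, as you partly acknowledge). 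With that repair your argument closes, provided you also note that $\val P_M^{II}(h)$ is finite --- the constraints bound the diagonal of $X$, hence $\bg_N^TX(\bx_N-\bx_\ast)$ and so $\delta_N-\delta_\ast$ --- or argue that an unbounded primal would force both values to $+\infty$. The paper certifies strong duality from the opposite side, which is lighter: $P_M^{III}(h)$ is strictly feasible because $c_i$, $d$, $\Phi_{i,i}$ sit on the diagonal of its PSD constraint and can be taken arbitrarily large, and $P_M^{II}(h)$ is feasible, so the conic duality theorem applies without constructing any strictly feasible $(X,\delta)$.
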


\begin{proof} We attach the dual variables to each of the constraints in $P_M^{II}(h)$ as follows:
\begin{align*}
			a_{i,j}\in \mathbb{R}_+:\ & \delta_{i} -\delta_j\leq  \bg_i^T X (\bx_i-\bx_j), \quad i=M+1,\dots,N,\ j=1,\dots,i-1,\\
			b_i\in \mathbb{R}_+:\ & \delta_i -\delta_\ast\leq  \bg_i^T X (\bx_i-\bx_\ast), \quad i=1,\dots,N,\\
			c_i\in \mathbb{R}_+:\ & \bg_i^T X \bg_i \leq L^2, \quad i=M+1,\dots,N,\\
			d\in \mathbb{R}_+:\ & \bx_\ast^T X \bx_\ast \leq R^2, \\
			\Phi_{i,j}\in \mathbb{R}:\ & \bv_i^T X \bv_j = \langle v_i, v_j\rangle, \quad i, j=1,\dots,r.
\end{align*}
Recalling that
$\delta_i$ and $\bg_i^T X \bx_i=\langle g_i,x_i-x_0\rangle$ are fixed for $i=1,\dots,M$,
and that the set $\{v_1,\dots,v_r\}$ is orthonormal,
the Lagrangian for this maximization problem
is given by
\begin{eqnarray*}
L(X,\delta;a,b,c,d,\Phi)&=& \delta_{N}-\delta_\ast+\sum_{i=M+1}^N D_i \delta_i + D_\ast \delta_\ast + \trace( X W ) + \mathcal{C},\\
& \equiv & L_1(\delta;a,b) + \trace (X W) + \mathcal{C},
\end{eqnarray*}
with
\begin{eqnarray*}
D_i & = & -\sum_{j=1}^{i-1} a_{i,j} + \sum_{j=i+1}^N a_{j,i} - b_i, \quad i=M+1,\dots,N, \\
D_\ast &= & \sum_{j=1}^N b_j,
\end{eqnarray*}
\begin{eqnarray*}
W &=&\sum_{i=M+1}^{N}\sum_{j=1}^{i-1} a_{i,j} (\bx_i-\bx_j)\bg_i^T +\sum_{i=M+1}^{N} b_i \bx_i \bg_i^T  -\sum_{i=1}^{N} b_i \bx_\ast \bg_i^T -\sum_{i=M+1}^N c_i \bg_i \bg_i^T \\&&- d \bx_\ast \bx_\ast^T - \sum_{i, j=1}^r \Phi_{i,j} \bv_i \bv_j^T,\\
\mathcal{C} &=& \sum_{i=M+1}^N \sum_{j=1}^M a_{i,j} \delta_j+\sum_{i=1}^M b_i (\langle g_i, x_i-x_0\rangle-\delta_i)+L^2\! \sum_{i=M+1}^N c_i + R^2 d+ \sum_{i=1}^r \Phi_{i,i}.
\end{eqnarray*}
The dual objective function is then defined by
$$
H(a,b,c,d, \Phi)=\max_{\delta, X} L(X, \delta; a,b,c,d, \Phi)=\mathcal{C}+ \max_{\delta}L_1(\delta;a,b) + \max_{X\succeq 0} \trace( X W).
$$
Since $L_1(\delta;a,b)$ is linear in the variables $\delta_i$, $i=M+1,\dots,N,\ast$,
the first maximization problem is equal to zero whenever
\begin{align*}
\left\{
\begin{aligned}
			& D_i = -\sum_{j=1}^{i-1} a_{i,j} + \sum_{j=i+1}^N a_{j,i} - b_i = 0, \quad i=M+1,\dots,N-1, \\
			& 1 + D_N = 1 - \sum_{j=1}^{N-1} a_{N,j} - b_N = 0,\\
			& -1 + D_\ast = -1 + \sum_{j=1}^N b_j = 0,
\end{aligned}
\right.
\end{align*}
i.e., when $(a,b)\in \Lambda$,
and is equal to infinity otherwise. Likewise, the second maximization is equal to zero whenever $W \preceq 0$, and is equal to infinity otherwise.
Therefore, the dual problem of $P_M^{II}(h)$ reads as
\[
	\min_{a,b,c,d, \Phi }H(a,b,c,d, \Phi)=\min_{a, b, c,d, \Phi} \{ \mathcal{C}: W\preceq 0,\ (a,b)\in \Lambda,\ a_{i,j}\geq 0,\ b_i\geq 0,\ c_i\geq 0,\ d\geq 0\},
\]
which reduces to the minimization problem $P_M^{III}(h)$.

Now, as a consequence of weak duality for the pair of problems $(P^{II}_M(h))$--$(P^{III}_M(h))$ it immediately follows that
$$\val (P_M^{II})=\min_h P_M^{II}(h) \leq \min_{h} P_M^{III}(h)=\val (P_M^{III}).$$ Furthermore, observing that
$P_M^{II}(h)$ is feasible and that $P_M^{III}(h)$
is strictly feasible (since the elements in the diagonal of the SDP constraint,
i.e., $c_i$, $d$, and $\Phi_{i,i}$, can be chosen to be arbitrarily large), then by invoking the conic duality theorem~\cite[Theorem~2.4.1]{ben2001lectures},
strong duality holds, i.e., $P_M^{II}(h) = P_M^{III}(h)$, and the proof is complete.
\end{proof}
As an immediate consequence, we have
\[
	\val (P_M^{III}):= \min_h P_M^{III}(h)  = \min_h P_M^{II}(h)  = \val (P_M^{II}).
\]

\subsection{A Tight Convex SDP Relaxation for $(P^{III}_M)$}\label{S:tightconvex}

At this stage, the minimization problem $(P^{III}_M)$ we have just derived remains a nonconvex (bilinear) problem.
Indeed, as noted above, the vectors $\bx_i$ depend on the optimization variable $h$,
hence the terms $a_{i,j} (\bx_i-\bx_j)$ and $b_i \bx_i$ in $(P_M^{III})$ are bilinear.
We will now show that it is possible to derive a {\em tight convex relaxation} for this problem.
This will be achieved through two main steps as follows.

\paragraph{Step I: Linearizing the bilinear SDP.}
As just noted,  the terms $a_{i,j} (\bx_i-\bx_j)$ and $b_i \bx_i$ in $(P_M^{III})$ are bilinear.
Here we linearize these terms by introducing new variables $\xi_{i,j}$ and $\psi_{i,j}$ such that
\begin{equation}\label{E:linearize}
	-\left(\sum_{j=1}^{i-1} a_{i,j} (\bx_i-\bx_j)+b_i \bx_i\right) = \sum_{j=1}^{r} \xi_{i,j} \bv_j + \sum_{j=M+1}^{i-1} \psi_{i,j} \bg_j, \quad i=M+1,\dots,N.
\end{equation}
Using \eqref{E:linearize} to eliminate the bilinear terms in $(P_M^{III})$ yields the following linear SDP:
\begin{align*}
    (P_M^{IV})\ \min_{a, b, c, d, \xi, \psi, \Phi} &\ \sum_{i=M+1}^N \sum_{j=1}^M a_{i,j} \delta_j+\sum_{i=1}^M b_i (\langle g_i, x_i-x_0\rangle-\delta_i)+L^2\! \sum_{i=M+1}^N c_i + R^2 d + \sum_{i=1}^r \Phi_{i,i} \\
    \text{s.t.}\quad
			& \sum_{i=M+1}^N \left( \sum_{j=1}^{r} \xi_{i,j} \bv_j + \sum_{j=M+1}^{i-1} \psi_{i,j} \bg_j \right)\bg_i^T  +\sum_{i=1}^{N} b_i \bx_\ast \bg_i^T \\&\qquad +\sum_{i=M+1}^N c_i \bg_i \bg_i^T + d \bx_\ast \bx_\ast^T + \sum_{i,j=1}^r \Phi_{i,j} \bv_i \bv_j^T \succeq 0, \\
			& (a,b)\in \Lambda,\ a_{i,j}\geq 0,\ b_i\geq 0,\ c_i\geq 0,\ d\geq 0.
\end{align*}

Since any feasible point for $(P_M^{III})$ can be transformed using \eqref{E:linearize}
to a feasible point for $(P_M^{IV})$ without affecting the objective value,
we have
\begin{equation}\label{revers}
\val (P_M^{IV})\leq \val(P_M^{III}).
\end{equation}
As a first step in establishing inequality in the other direction (and therefore equality), 
we introduce the following lemma,
which shows how to  recover
a feasible point for $(P_M^{III})$
from a feasible point for $(P_M^{IV})$
provided that the point satisfies a certain condition.
\begin{lemma}\label{L:delin}
Suppose that $(a, b, c, d, \xi, \psi, \Phi)$ is feasible for $(P_M^{IV})$
and satisfies
\begin{equation}\label{E:nonzerolin}
	\sum_{j=1}^{i-1} a_{i,j}+b_i=0 \Rightarrow \xi_{i,k}=\psi_{i,k}=0,\ \forall k<i.
\end{equation}
Then by taking\footnote{In order to avoid overly numerous special cases,
we adopt the convention $\frac{0}{0}=0$.}
\begin{align*}
		h^{(i)}_{1,k} = \frac{a_{i,k}}{\sum_{j=1}^{i-1} a_{i,j}+ b_i}, \quad
		h^{(i)}_{2,k} = \frac{\xi_{i,k}}{\sum_{j=1}^{i-1} a_{i,j}+ b_i}, \quad
		h^{(i)}_{3,k} = \frac{\psi_{i,k}}{\sum_{j=1}^{i-1} a_{i,j}+ b_i},
\end{align*}
we get that $(h, a, b, c, d, \Phi)$
is feasible for $(P_M^{III})$ and attains the same objective value.
\end{lemma}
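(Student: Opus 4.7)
The plan is to verify feasibility of $(h,a,b,c,d,\Phi)$ for $(P_M^{III})$ by checking each constraint in turn, and observe that objective-value preservation is essentially free. Indeed, the objective of $(P_M^{III})$ depends only on $a,b,c,d,\Phi$ (not on $h,\xi,\psi$) and is identical in form to the objective of $(P_M^{IV})$, so both values agree immediately. Likewise, the affine/sign constraints $(a,b)\in\Lambda$, $a_{i,j}\ge 0$, $b_i\ge 0$, $c_i\ge 0$, $d\ge 0$ do not involve $h,\xi,\psi$ and are therefore inherited from the feasibility of $(a,b,c,d,\xi,\psi,\Phi)$ in $(P_M^{IV})$. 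Consequently, the only nontrivial task is to show that the SDP constraint of $(P_M^{III})$ holds with the prescribed choice of $h$.

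To handle the SDP constraint, I would subtract the $b_i \bx_\ast\bg_i^T$, $c_i \bg_i\bg_i^T$, $d\bx_\ast\bx_\ast^T$ and $\Phi_{i,j}\bv_i\bv_j^T$ terms, which are common to both problems, and reduce the matter to showing the identity
\[
	-\sum_{i=M+1}^{N}\Bigl(\sum_{j=1}^{i-1} a_{i,j}(\bx_i-\bx_j)+b_i\bx_i\Bigr)\bg_i^T
	= \sum_{i=M+1}^{N}\Bigl(\sum_{j=1}^{r} \xi_{i,j}\bv_j+\sum_{j=M+1}^{i-1} \psi_{i,j}\bg_j\Bigr)\bg_i^T.
\]
Since the vectors $\bg_i$ for $i=M+1,\dots,N$ are the distinct canonical basis vectors $e_{1+r+i-M}$ (by \eqref{D:boldletters}), the outer products $(\cdot)\bg_i^T$ occupy disjoint columns, so it suffices to establish the corresponding identity for each $i$ separately.

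Fix $i$ and set $\alpha_i:=\sum_{j=1}^{i-1} a_{i,j}+b_i$. When $\alpha_i\neq 0$, substituting the proposed $h^{(i)}_{1,k}, h^{(i)}_{2,k}, h^{(i)}_{3,k}$ into the recursive formula for $\bx_i$ from \eqref{D:boldletters} and multiplying through by $\alpha_i$ yields
\[
	\alpha_i \bx_i = \sum_{k=1}^{i-1} a_{i,k}\bx_k - \sum_{k=1}^{r}\xi_{i,k}\bv_k - \sum_{k=M+1}^{i-1}\psi_{i,k}\bg_k,
\]
which rearranges exactly to the required per-$i$ identity once one observes that $\sum_{j=1}^{i-1} a_{i,j}(\bx_i-\bx_j)+b_i\bx_i=\alpha_i \bx_i-\sum_{j=1}^{i-1} a_{i,j}\bx_j$. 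When $\alpha_i=0$, nonnegativity of the $a_{i,j}$ and $b_i$ forces all of these to vanish, so the left-hand side of the per-$i$ identity is $0$; on the other hand, hypothesis \eqref{E:nonzerolin} forces $\xi_{i,k}=\psi_{i,k}=0$ for all $k$, so the right-hand side is also $0$, and the identity holds trivially (consistently with the $\tfrac{0}{0}=0$ convention used to define $h^{(i)}_{j,k}$).

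The only delicate point is thus the degenerate case $\alpha_i=0$, which is exactly why the hypothesis \eqref{E:nonzerolin} is imposed; without it, one could have nonzero $\xi_{i,k}$ or $\psi_{i,k}$ that cannot be expressed through any finite choice of $h^{(i)}$, and no $(P_M^{III})$-feasible point would correspond to the $(P_M^{IV})$-feasible point. Once this case is treated as above, summing the per-$i$ identities recovers the SDP constraint of $(P_M^{III})$ from that of $(P_M^{IV})$, completing the verification of feasibility and matching objective values.
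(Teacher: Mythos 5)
Your proposal is correct and follows essentially the same route as the paper: reduce everything to verifying the linearization identity \eqref{E:linearize} for each $i$ (the objective and the constraints in $\Lambda$ being unaffected), treat the degenerate case $\sum_{j=1}^{i-1}a_{i,j}+b_i=0$ via the hypothesis \eqref{E:nonzerolin} and the $\tfrac{0}{0}=0$ convention, and handle the nondegenerate case by substituting the definition of $\bx_i$ with the chosen $h$. The only difference is your (harmless, not strictly needed) remark about the $\bg_i^T$ outer products occupying disjoint columns; otherwise the arguments coincide.
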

\begin{proof}
It is enough to verify that the linearization identity \eqref{E:linearize} is satisfied for the chosen values of $h$.
First, when $\sum_{j=1}^{i-1} a_{i,j}+b_i=0$, recalling that we use the convention $\frac{0}{0}=0$, the identity~\eqref{E:linearize} follows immediately from the assumption~\eqref{E:nonzerolin}
and since the step sizes are all zeros.
Suppose $\sum_{j=1}^{i-1} a_{i,j}+b_i>0$, then
substituting the term $\bx_i$ in \eqref{E:linearize} by its definition in~\eqref{D:boldletters}, we get
that for every $i=M+1,\dots,N$
\begin{align*}
	& -\left(\sum_{j=1}^{i-1} a_{i,j} (\bx_i-\bx_j)+b_i \bx_i\right) =\sum_{j=1}^{i-1} a_{i,j} \bx_j-\left(\sum_{j=1}^{i-1} a_{i,j} +b_i\right) \bx_i\\
	&=\sum_{j=1}^{i-1} a_{i,j} \bx_j-\left(\sum_{j=1}^{i-1} a_{i,j} +b_i\right) \left(\sum_{k=1}^{i-1} h^{(i)}_{1,k} \bx_k - \sum_{k=1}^{r} h^{(i)}_{2,k} \bv_k - \sum_{k=M+1}^{i-1} h^{(i)}_{3,k} \bg_k\right)\\
	&= \sum_{j=1}^{r} \xi_{i,j} \bv_j + \sum_{j=M+1}^{i-1} \psi_{i,j} \bg_j,
\end{align*}
where the last equality follows from the choice of $h$.
\end{proof}
In order to establish that the relaxation performed in this step is indeed tight,
it is enough to show that condition~\eqref{E:nonzerolin} holds for an optimal solution of $(P_M^{IV})$.
However,
before we can show 
how to obtain an optimal solution with the required property,
we need to perform an additional transformation on the problem, which in turn
will also be very useful when deriving the steps of Algorithm~KLM in Section~\ref{S:algklm}.



\paragraph{Step II: Simplifying the problem $(P_M^{IV})$.}
An equivalent and significantly simpler form of problem
$(P_M^{IV})$ can be derived using the matrix completion theorem.

Consider the PSD constraint in $(P_M^{IV})$ in its explicit form,
\[\footnotesize
	Q:=\begin{pmatrix}	d	&	\frac{1}{2} \sum_{k=1}^M b_k \langle g_k, v_1\rangle	& \cdots 	&	\frac{1}{2} \sum_{k=1}^M b_k \langle g_k, v_r\rangle & \frac{1}{2}b_{M+1}	&	\cdots &	\frac{1}{2}b_{N}\\	
				\frac{1}{2} \sum_{k=1}^M b_k \langle g_k, v_1\rangle 	&	\Phi_{1,1}	&	\cdots	&	\Phi_{1,r}	& \frac{1}{2} \xi_{M+1,1}	&	\cdots	&	\frac{1}{2} \xi_{N,1}\\
				\vdots 													&	\vdots		&	\ddots	&	\vdots		& \vdots					&	\ddots	&	\vdots\\
				\frac{1}{2} \sum_{k=1}^M b_k \langle g_k, v_r\rangle	&	\Phi_{r,1}	&	\cdots	&	\Phi_{r,r}	& \frac{1}{2} \xi_{M+1,r}	&	\cdots	& \frac{1}{2} \xi_{N,r}\\
				\frac{1}{2}b_{M+1}										&	\frac{1}{2} \xi_{M+1,1}	&	\cdots	&	\frac{1}{2} \xi_{M+1,r}		&	\\
				\vdots													&	\vdots					&	\ddots	&	\vdots						&		&	R\\
				\frac{1}{2}b_N											&	\frac{1}{2} \xi_{N,1}	&	\cdots	&	\frac{1}{2} \xi_{N,r}		&	\\
	\end{pmatrix} \succeq 0,
\]
with
\[
	R:= \begin{pmatrix}
		c_{M+1}	&	\frac{1}{2} \psi_{M+2,M+1}	&	\cdots	&	\frac{1}{2} \psi_{N,M+1}\\
		\frac{1}{2} \psi_{M+2,M+1}	&	c_{M+2}		&	\ddots &	\vdots\\
		\vdots		&	\ddots	&	\ddots	&	\frac{1}{2} \psi_{N,N-1}\\
		\frac{1}{2} \psi_{N,M+1}	&	\cdots	&	\frac{1}{2} \psi_{N,N-1}	&	c_N
	\end{pmatrix}.
\]
Then by the properties of PSD matrices, $Q \succeq 0$
implies that the principal minors of $Q$ are also PSD.
As a result, we get that
the problem
\begin{align*}
    (P_M^{V})\quad \min_{a, b, c, d, \Phi} &\ \sum_{i=M+1}^N \sum_{j=1}^M a_{i,j} \delta_j+\sum_{i=1}^M b_i (\langle g_i, x_i-x_0\rangle-\delta_i)+L^2\! \sum_{i=M+1}^N c_i + R^2 d + \sum_{i=1}^r \Phi_{i,i} \\
    \text{s.t.}\quad
			& \begin{pmatrix}	d	&	\frac{1}{2}\sum_{k=1}^M b_k \langle g_k, v_i\rangle \\ \frac{1}{2}\sum_{k=1}^M b_k \langle g_k, v_i\rangle	&	\Phi_{i,i} \end{pmatrix} \succeq 0,\quad  i=1,\dots,r,\\
			& \begin{pmatrix}	d	&	\frac{1}{2} b_i \\ \frac{1}{2} b_i	&	c_i \end{pmatrix} \succeq 0,\quad  i=M+1,\dots,N,\\
			& (a,b)\in \Lambda,\ a_{i,j}\geq 0,\ b_i\geq 0,\ c_i\geq 0,\ d\geq 0,
\end{align*}
obtained by replacing $Q\succeq 0$
with constraints of the form $Q_{\{1,i\}\times\{1,i\}}\succeq 0$,
is a relaxation of $(P_M^{IV})$, and thus $\val(P_M^{V})\leq \val(P_M^{IV})$.
As we shall prove below, it turns out that this relaxation is, in fact tight, i.e., $\val(P_M^{V})= \val(P_M^{IV})$.
To establish this result, we need   the following lemma, which is a special case of the matrix completion theorem~\cite{grone1984positive}.

\begin{lemma}\label{L:matrixcompletion}
Suppose $q_{1,i}=q_{i,1}$ and $q_{i,i}$ ($i=1,\dots,n$) are numbers such that
\[
	\begin{pmatrix}
	q_{1,1}	&	q_{1,i}\\
	q_{i,1}	&	q_{i,i}
	\end{pmatrix}\succeq 0, \quad i=2,\dots,n.
\]
Then by taking
\begin{equation}\label{E:matrixcompletion}
	q_{i,j} = q_{j,i} =
		\frac{q_{1,i} q_{1,j}}{q_{1,1}},	
\end{equation}
for $i,j=2,\dots,n$, $i\neq j$,
we get that the $n\times n$ matrix $(q_{i,j})$ is positive semidefinite.
\end{lemma}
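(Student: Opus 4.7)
The plan is to exhibit the completed matrix as a sum of a rank-one PSD matrix plus a diagonal PSD matrix, which immediately yields positive semidefiniteness. The key observation is that the completion rule~\eqref{E:matrixcompletion} has exactly the form of a rank-one outer product divided by $q_{1,1}$, so once we peel off this rank-one piece the diagonal remainders are controlled by the $2\times 2$ PSD hypotheses.

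Concretely, I would first dispose of the case $q_{1,1}>0$. Define $v\in \mathbb{R}^n$ by $v_i := q_{1,i}/\sqrt{q_{1,1}}$ for $i=1,\dots,n$ and let $D$ be the diagonal matrix with $D_{1,1}=0$ and $D_{i,i} := q_{i,i}-q_{1,i}^2/q_{1,1}$ for $i\geq 2$. A direct check using~\eqref{E:matrixcompletion} shows that the completed matrix $Q=(q_{i,j})$ equals $vv^T+D$. The rank-one term $vv^T$ is PSD by construction, and each diagonal entry $D_{i,i}$ is nonnegative because it is precisely the Schur complement of $q_{1,1}$ in the given $2\times 2$ principal submatrix, which is assumed PSD (equivalently, $\det\begin{pmatrix}q_{1,1}&q_{1,i}\\q_{i,1}&q_{i,i}\end{pmatrix}\geq 0$ together with $q_{1,1}>0$ gives $q_{i,i}\geq q_{1,i}^2/q_{1,1}$). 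Hence $Q\succeq 0$.

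The only subtlety is the degenerate case $q_{1,1}=0$, where the formula~\eqref{E:matrixcompletion} is to be read with the convention $0/0=0$ adopted in the paper. Here the PSD assumption on each $2\times 2$ block forces $q_{1,i}=q_{i,1}=0$ for every $i\geq 2$ (any symmetric $2\times 2$ PSD matrix with a vanishing diagonal entry has a vanishing corresponding off-diagonal entry). Under the convention, the completion then sets $q_{i,j}=0$ for all $i\neq j$, so $Q$ reduces to the diagonal matrix $\diag(0,q_{2,2},\dots,q_{n,n})$, whose diagonal entries are nonnegative by the same $2\times 2$ hypothesis. Thus $Q\succeq 0$ in this case as well.

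There is no real obstacle here; the argument is essentially the Schur complement characterization of positive semidefiniteness dressed up in index notation. The only point requiring a moment's care is making the degenerate case fit the stated formula, which is handled cleanly by the $0/0=0$ convention invoked earlier in the paper.
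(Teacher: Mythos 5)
Your proof is correct and follows essentially the same route as the paper: both decompose the completed matrix as the rank-one PSD matrix $q_{1,1}^{-1}\gamma\gamma^T$ (your $vv^T$) plus a nonnegative diagonal matrix whose entries are the Schur complements $q_{i,i}-q_{1,i}^2/q_{1,1}$, and both handle the degenerate case $q_{1,1}=0$ via the $\frac{0}{0}=0$ convention after observing that the $2\times 2$ PSD hypothesis forces $q_{1,i}=0$. No changes needed.
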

\begin{proof}
Suppose $q_{1,1}=0$, then by the properties of PSD matrices, $q_{1,i}$ and $q_{i,1}$ must also be equal to zero.
By adopting the convention $\frac{0}{0}=0$, we get that $q_{i,j} = q_{j,i} = 0$ for $i,j=2,\dots,n$, hence the matrix $(q_{i,j})$ is diagonal and the result is trivial.

Now assume $q_{1,1}>0$ and
let $\gamma=(q_{1,1},\dots,q_{1,n})^T$,
then the claim follows immediately by observing that the matrix $(q_{i,j})$ is the sum of
the positive semidefinite rank-one matrix $q_{1,1}^{-1} \gamma \gamma^T$ and
the nonnegative diagonal matrix $\diag(0,q_{2,2}-q_{1,2}^2/q_{1,1},\dots,q_{n,n}-q_{1,n}^2/q_{1,1})$.
\end{proof}

The promised tightness of the relaxation performed in this step now follows.
\begin{corollary}\label{C:tightnessV}
Suppose $(a^\ast, b^\ast,c^\ast,d^\ast,\Phi_{i,i}^\ast)$ is an optimal solution for $(P_M^{V})$,
then taking
\begin{equation}
	\begin{aligned}
	& \Phi_{i,j}^\ast =
		\frac{\sum_{k=1}^M b_k^\ast \langle g_k, v_i\rangle \sum_{k=1}^M b_k^\ast \langle g_k, v_j\rangle}{2 d^\ast}, \quad i,j=1,\dots,r,\ i\neq j,\\
	& \xi_{i,j}^\ast =
		\frac{b_i^\ast\sum_{k=1}^M b_k^\ast \langle g_k, v_j\rangle}{2 d^\ast},  \quad i=M+1,\dots,N,\ j=1,\dots,r,\\
	& \psi_{i,j}^\ast =
		\frac{ b_i^\ast b_j^\ast}{2d^\ast},  \qquad i=M+1,\dots,N,\ j=M+1,\dots,i-1.
	\end{aligned}\label{E:optimalxipsi}
\end{equation}
we get that $(a^\ast, b^\ast,c^\ast,d^\ast,\xi^\ast,\psi^\ast,\Phi^\ast)$ is
an optimal solution for $(P_M^{IV})$.
In particular, we have $\val (P_M^{IV})= \val (P_M^{V})$.
\end{corollary}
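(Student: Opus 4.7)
The plan is to apply Lemma~\ref{L:matrixcompletion} directly to the matrix $Q$ whose PSD constraint appears in $(P_M^{IV})$. The key observation is that the two families of $2\times 2$ PSD constraints in $(P_M^{V})$ are precisely the principal minors of $Q$ formed by intersecting row/column $1$ with row/column $i$, for $i$ ranging over the $v$-block $\{2,\dots,1+r\}$ and the $g$-block $\{2+r,\dots,1+r+N-M\}$. Hence the given optimal solution $(a^\ast, b^\ast, c^\ast, d^\ast, \Phi_{i,i}^\ast)$ of $(P_M^{V})$ supplies valid values for the first row, the first column, and the diagonal of $Q$, and moreover these values verify the hypothesis of Lemma~\ref{L:matrixcompletion}.

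Next I would invoke Lemma~\ref{L:matrixcompletion} to fill in the remaining entries of $Q$ via the rule $q_{i,j} = q_{1,i} q_{1,j}/q_{1,1}$. Reading off the first-row entries from the displayed form of $Q$ (namely $\frac{1}{2}\sum_k b_k^\ast\langle g_k,v_i\rangle$ in the $v$-block and $\frac{1}{2}b_i^\ast$ in the $g$-block) and matching the completed entries against the off-diagonal positions of $Q$, one recovers exactly the expressions in~\eqref{E:optimalxipsi} for $\Phi_{i,j}^\ast$ (from $v\times v$ pairs), $\xi_{i,j}^\ast$ (from $v\times g$ cross pairs), and $\psi_{i,j}^\ast$ (from $g\times g$ pairs). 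By the conclusion of the lemma, the completed matrix $Q$ is positive semidefinite, so the PSD constraint of $(P_M^{IV})$ is satisfied by the constructed tuple; the remaining linear constraints $(a,b)\in\Lambda$ and the nonnegativity constraints on $a,b,c,d$ carry over unchanged from $(P_M^{V})$.

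Finally, I would observe that the objective of $(P_M^{IV})$ involves only $(a,b,c,d,\Phi_{i,i})$ and not the newly introduced variables $\Phi_{i,j}$ ($i\neq j$), $\xi_{i,j}$, $\psi_{i,j}$. Therefore the constructed feasible point for $(P_M^{IV})$ attains the same objective value as the given optimal point of $(P_M^{V})$, yielding $\val(P_M^{IV}) \leq \val(P_M^{V})$. Combined with the reverse inequality $\val(P_M^{V}) \leq \val(P_M^{IV})$ already established in Step~II (since $(P_M^{V})$ is a relaxation of $(P_M^{IV})$), one concludes $\val(P_M^{IV}) = \val(P_M^{V})$ and that the constructed tuple is optimal.

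The only mild subtlety I anticipate is the degenerate case $d^\ast = 0$: then the $2\times 2$ PSD constraints force all first-row entries of $Q$ involving $b^\ast$ and $\sum_k b_k^\ast \langle g_k, v_i\rangle$ to vanish, and the convention $\tfrac{0}{0}=0$ (already adopted in Lemma~\ref{L:delin}) makes the completion formulas well-defined, with $Q$ reducing to a nonnegative diagonal matrix. This is not a genuine obstacle but is worth noting to close off the proof cleanly; the rest is bookkeeping that the first-row entries of the displayed $Q$ really are what I claimed and that the factor $\tfrac12$ therein propagates correctly into the denominators $2d^\ast$ appearing in~\eqref{E:optimalxipsi}.
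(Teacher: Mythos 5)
Your proposal is correct and follows essentially the same route as the paper's own proof: you identify the $2\times 2$ constraints of $(P_M^{V})$ with the principal minors required in the premise of Lemma~\ref{L:matrixcompletion}, complete $Q$ by the rank-one rule to obtain the entries in~\eqref{E:optimalxipsi}, observe that the filled-in variables $\Phi_{i,j}$ ($i\neq j$), $\xi$, $\psi$ appear neither in the objective nor in any other constraint of $(P_M^{IV})$, and combine this with the already established inequality $\val(P_M^{V})\leq\val(P_M^{IV})$. Your closing remark on the degenerate case $d^\ast=0$ is already covered by the $q_{1,1}=0$ case inside Lemma~\ref{L:matrixcompletion} (with the $\tfrac{0}{0}=0$ convention), so no argument beyond the paper's is needed.
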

\begin{proof}
Observing that the minors of $Q$ selected in $(P_M^{V})$ have the same form as
in the premise of Lemma~\ref{L:matrixcompletion}
with $n=1+r+(N-M)$,
\begin{align*}
	& q_{1,1}=d,\\
	& q_{1+i,1+i}=\Phi_{i,i}, \quad i=1,\dots,r,\\
	& q_{1+r+i,1+r+i}=c_i, \quad i=M+1,\dots,N,\\
	& \textstyle q_{1,1+i}=q_{1+i,1}= \frac{1}{2} \sum_{k=1}^M b_k \langle g_k, v_1\rangle, \quad i=1,\dots,r,\\
	& \textstyle q_{1,1+r+i}=q_{1+r+i,1}=\frac{1}{2}b_i, \quad i=M+1,\dots,N,
\end{align*}
we get that using the choice~\eqref{E:optimalxipsi}, the relations~\eqref{E:matrixcompletion} are satisfied,
hence $Q$ is PSD and the first constraint in $(P_M^{IV})$ is satisfied for $(a^\ast, b^\ast,c^\ast,d^\ast,\xi^\ast,\psi^\ast,\Phi^\ast)$.
Now, examining $(P_M^{IV})$, we see that the variables $\Phi_{i,j}$ for $i\neq j$,
$\xi_{i,j}$ and $\psi_{i,j}$, do not participate in constraints beside the first constraint or in the objective,
hence we conclude that $(a^\ast, b^\ast,c^\ast,d^\ast,\xi^\ast,\psi^\ast,\Phi^\ast)$ is feasible for $(P_M^{IV})$ and furthermore $\val (P_M^{IV}) \leq \val (P_M^{V})$.
Since we have already established that $\val (P_M^{V})\leq \val (P_M^{IV})$, the proof is complete.
\end{proof}

Another consequence of Lemma~\ref{L:matrixcompletion} is the tightness of the relaxation performed in Step~I, allowing us to complete our main goal of this section.

\begin{corollary}\label{C:tightnessIII} The following equality holds:
\[
	\val (P_M^{IV})=\val (P_M^{III}).
\]
\end{corollary}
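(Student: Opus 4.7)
The plan is to combine the inequality $\val(P_M^{IV}) \leq \val(P_M^{III})$ already noted in \eqref{revers} with a reverse inequality obtained by chaining Corollary~\ref{C:tightnessV} with Lemma~\ref{L:delin}. The idea is that Corollary~\ref{C:tightnessV} does not merely equate $\val(P_M^{IV})$ and $\val(P_M^{V})$: it produces an explicit optimal solution of $(P_M^{IV})$ via the formulas \eqref{E:optimalxipsi}, and these formulas have a very special structure that happens to meet the hypothesis \eqref{E:nonzerolin} of Lemma~\ref{L:delin} for free. Once we know this, Lemma~\ref{L:delin} converts such a solution back into a feasible point of $(P_M^{III})$ with the same objective value, giving $\val(P_M^{III}) \leq \val(P_M^{IV})$.

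More concretely, let $(a^\ast, b^\ast, c^\ast, d^\ast, \Phi_{i,i}^\ast)$ be an optimal solution of $(P_M^{V})$, and form the extension $(a^\ast, b^\ast, c^\ast, d^\ast, \xi^\ast, \psi^\ast, \Phi^\ast)$ using \eqref{E:optimalxipsi}. By Corollary~\ref{C:tightnessV} this is optimal for $(P_M^{IV})$. I would then verify condition \eqref{E:nonzerolin} as follows: since $a_{i,j}^\ast, b_i^\ast \geq 0$, the equality $\sum_{j=1}^{i-1} a_{i,j}^\ast + b_i^\ast = 0$ forces $b_i^\ast = 0$; the formulas \eqref{E:optimalxipsi} display $b_i^\ast$ as a factor in the numerator of both $\xi_{i,j}^\ast$ and $\psi_{i,j}^\ast$, so both quantities vanish whenever $b_i^\ast = 0$. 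The only subtle point is the degenerate case $d^\ast = 0$, in which the denominator also vanishes; but the two $2\times 2$ PSD minors appearing in $(P_M^{V})$ force $b_i^\ast = 0$ for all $i = M+1, \dots, N$ and $\sum_{k=1}^M b_k^\ast \langle g_k, v_j\rangle = 0$ whenever $d^\ast = 0$, so the convention $0/0 = 0$ adopted in the paper yields $\xi_{i,j}^\ast = \psi_{i,j}^\ast = 0$ consistently.

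With \eqref{E:nonzerolin} in hand, Lemma~\ref{L:delin} supplies step sizes $h$ for which $(h, a^\ast, b^\ast, c^\ast, d^\ast, \Phi^\ast)$ is feasible in $(P_M^{III})$ and attains the same objective value as the optimum of $(P_M^{IV})$. Hence $\val(P_M^{III}) \leq \val(P_M^{IV})$, and together with \eqref{revers} this gives equality. The whole argument is short; the only real obstacle is the bookkeeping check of \eqref{E:nonzerolin} together with the degenerate $d^\ast = 0$ case, which is precisely where the specific choice of extension in \eqref{E:optimalxipsi} has to be exploited.
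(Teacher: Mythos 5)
Your proposal is correct and follows essentially the same route as the paper: it combines the inequality \eqref{revers} with Corollary~\ref{C:tightnessV}, checks that the explicit formulas \eqref{E:optimalxipsi} make condition \eqref{E:nonzerolin} hold automatically (since $a^\ast,b^\ast\geq 0$ force $b_i^\ast=0$ when the sum vanishes, and $b_i^\ast$ is a factor of $\xi_{i,j}^\ast$ and $\psi_{i,j}^\ast$), and then invokes Lemma~\ref{L:delin} to get $\val(P_M^{III})\leq\val(P_M^{IV})$. Your explicit treatment of the degenerate case $d^\ast=0$ is a small extra care that the paper leaves implicit via the $\tfrac{0}{0}=0$ convention, but it does not change the argument.
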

\begin{proof} Let $(a^\ast, b^\ast,c^\ast,d^\ast,\Phi_{i,i}^\ast)$ be an optimal solution for $(P_M^{V})$.
Then from Corollary~\ref{C:tightnessV} we get that
by taking $\xi^\ast$, $\psi^\ast$, and $\Phi^\ast$ as in~\eqref{E:optimalxipsi}, the point
$(a^\ast, b^\ast,c^\ast,d^\ast,\xi^\ast,\psi^\ast,\Phi^\ast)$ is optimal for $(P_M^{IV})$.
Observing that from~\eqref{E:optimalxipsi} we get that $b_i^\ast= 0$ implies $\xi_{i,j}^\ast=0$ and $\psi_{i,j}^\ast=0$,
then it follows that assumption~\eqref{E:nonzerolin} is
satisfied, 
hence Lemma~\ref{L:delin} is applicable on $(a^\ast, b^\ast,c^\ast,d^\ast,\xi^\ast,\psi^\ast,\Phi^\ast)$.
As a result, 
the optimal value of $(P_M^{IV})$ is attainable by $(P_M^{III})$, 
and since we also have $\val (P_M^{IV})\leq \val (P_M^{III})$ (see~\eqref{revers}), we conclude that $\val (P_M^{III})=\val (P_M^{IV})$,
proving the desired claim.
\end{proof}

\paragraph{Summary.}
To summarize the results up to this point, by performing a series of relaxations and transformations on $(P_M)$, which defined the worst-case absolute inaccuracy at $x_N$, we obtained a sequence of problems $(P^I_M)$--$(P^{V}_M)$ that satisfy
\[
	\val (P_M) \leq \val (P^I_M) \leq \val (P^{II}_M) = \dots = \val (P^{V}_M),
\]
where  the solution of $(P^{V}_{M})$ provides a tractable upper bound. We are now left with our second main goal, namely
to derive the steps of algorithm KLM as defined through problem $(B_M)$ in Section \ref{S:kelleyvariant}.

\section{Derivation of Algorithm KLM}\label{S:algklm}

At first glance, problem $(P^V_{M})$ does not seem to share much resemblance to problem $(B_M)$.
We now proceed to show that this convex SDP problem admits a pleasant equivalent convex minimization reformulation over a simplex in $\real^{M+1}$,
and that this representation is, in fact, the dual of problem $(B_M)$.

\subsection{Reducing $(P^V_{M})$ to a Convex Minimization Problem Over the Unit Simplex}

The form $(P_M^{V})$ allows us to derive analytical optimal solutions to some of the optimization variables.
First, for any fixed $(a,b,d)$, it is easy to see that the minimization with respect to $\Phi$ and $c$
yields the optimal solutions
\begin{align}
	& \Phi^\ast_{i,i} = \frac{(\sum_{k=1}^M b_k \langle g_k,v_i\rangle)^2}{4 d}, \quad i=1,\dots,r, \label{E:optimalphi}\\
	& c^\ast_i = \frac{b_i^2}{4d}, \quad i=M+1,\dots,N. \label{E:optimalc}
\end{align}
Therefore, recalling that $\{v_1,\dots,v_r\}$ is an orthonormal set that spans $g_1,\dots,g_M$, we get
\begin{align*}
	\sum_{j=1}^r \Phi^\ast_{j,j} = \sum_{j=1}^r \frac{(\sum_{i=1}^M b_i \langle g_i,v_j\rangle)^2}{4 d}= \frac{\| \sum_{j=1}^r \sum_{i=1}^M b_i  \langle g_i,v_j\rangle v_j\|^2}{4 d}= \frac{\|\sum_{i=1}^M b_i g_i\|^2}{4 d},
\end{align*}
and $(P_M^{V})$ becomes
\begin{align*}
	\min_{a, b, d} &\ \sum_{i=M+1}^N \sum_{j=1}^M a_{i,j} \delta_j+\sum_{i=1}^M b_i (\langle g_i, x_i-x_0\rangle-\delta_i) + R^2 d + \frac{L^2 \sum_{k=M+1}^N b_i^2 + \|\sum_{i=1}^M b_i  g_i\|^2}{4 d}\\
    \text{s.t.}
			&\ \ (a,b)\in \Lambda,\ a_{i,j}\geq 0,\ b_i\geq 0,\ d\geq 0.
\end{align*}
Next, observe that for any fixed $(a,b)$ the minimization with respect to $d$ is also immediate and
yields
\begin{equation}\label{E:optimald}
	d^\ast = \frac{\sqrt{\textstyle \|{\sum_{i=1}^M b_i g_i}\|^2+ L^2 \sum_{i=M+1}^N b_i^2 }}{2R}.
\end{equation}
Plugging this in the last form of the problem, we reach
\begin{align}
	\begin{aligned}
    \min_{a, b} & \sum_{i=M+1}^N \sum_{j=1}^M a_{i,j} \delta_j+\sum_{i=1}^M b_i (\langle g_i, x_i-x_0\rangle-\delta_i)+ R\sqrt{\textstyle \|{\sum_{i=1}^M b_i  g_i}\|^2+ L^2 \sum_{i=M+1}^N b_i^2} \\
    \text{s.t.}
			&\ (a,b)\in \Lambda,\ a_{i,j}\geq 0,\ b_i\geq 0.
	\end{aligned}\label{P:vb}
\end{align}
Now, fixing $b$, the above minimization problem is a linear program in the variable $a$,
which, as shown by the following lemma, can be solved analytically.
\begin{lemma}\label{L:optimala}
Suppose $b\in \Delta_N$, where $\Delta_N$ denotes the $N$-dimensional unit simplex, i.e., $\Delta_N:=\{b\in\mathbb{R}^N: \sum_{i=1}^N b_i =1, b_i\geq 0\}$.
Then,
\begin{align*}
	\min_{a} \left\{ \sum_{i=M+1}^N \sum_{j=1}^M a_{i,j} \delta_j: (a,b)\in \Lambda,\ a_{i,j}\geq 0\right\}= \sum_{i=1}^M b_i \delta_m,
\end{align*}
where an optimal solution is given by
\begin{equation}\label{E:optimala}
	a_{i,j}^\ast = \begin{cases}
		\sum_{i=1}^M b_i	&	i=N,\ j=m, \\
		b_j,						& 	i=N,\ j\in\{M+1,\dots,N-1\},\\
		0,						&	\text{otherwise,}
	\end{cases}
\end{equation}
with
\begin{equation}\label{E:optimalm}
	m\in \argmin_{1\leq i\leq M} \delta_i.
\end{equation}
\end{lemma}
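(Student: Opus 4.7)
The plan is to handle this as a linear program with a min-cost-flow structure: exhibit the candidate optimal solution \eqref{E:optimala}, verify it is feasible and attains the claimed objective value, and then prove a matching lower bound via a global flow-conservation identity.

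First, I would check feasibility of $a^\ast$. Since $a^\ast_{i,j}$ is nonzero only when $i=N$, the flow-balance constraint $\sum_{j=i+1}^N a_{j,i}-\sum_{j=1}^{i-1} a_{i,j}=b_i$ for $i=M+1,\dots,N-1$ collapses to $a^\ast_{N,i}=b_i$, which is exactly the definition. The remaining $i=N$ constraint becomes
\[
\sum_{j=1}^{N-1} a^\ast_{N,j}+b_N=\sum_{i=1}^M b_i+\sum_{j=M+1}^{N-1}b_j+b_N=\sum_{i=1}^N b_i=1,
\]
using $b\in\Delta_N$. Substituting $a^\ast$ into the objective, only $a^\ast_{N,m}=\sum_{i=1}^M b_i$ contributes (all other $a^\ast_{i,j}$ with $j\le M$ vanish), giving objective value $\delta_m\sum_{i=1}^M b_i$.

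Second, for the matching lower bound I would establish the aggregate identity
\[
\sum_{i=M+1}^N \sum_{j=1}^M a_{i,j} \;=\; \sum_{i=1}^M b_i
\]
for every feasible $a$. This is derived by summing the flow-balance constraint over $i=M+1,\dots,N-1$ and splitting $\sum_{j=1}^{i-1}a_{i,j}$ into its $j\le M$ and $M+1\le j\le i-1$ parts. The ``interior'' contributions $a_{i,j}$ with $M+1\le j<i\le N-1$ appear once with each sign and cancel, leaving $\sum_{j=M+1}^{N-1} a_{N,j}$ against $\sum_{i=M+1}^{N-1} b_i+\sum_{i=M+1}^{N-1}\sum_{j=1}^M a_{i,j}$; combining with the $i=N$ constraint and $\sum_i b_i=1$ yields the identity.

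Finally, because $a_{i,j}\ge 0$ and $\delta_j\ge\delta_m$ for every $j=1,\dots,M$, the objective is bounded from below by $\delta_m\sum_{i,j} a_{i,j}=\delta_m\sum_{i=1}^M b_i$ for any feasible $a$, matching the value attained by $a^\ast$ and completing the proof. The main obstacle is careful bookkeeping of the index ranges in the telescoping argument for the conservation identity; once this is set up, both feasibility and optimality reduce to direct substitutions.
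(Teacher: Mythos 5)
Your proof is correct and follows essentially the same route as the paper: both hinge on summing (telescoping) the flow-balance constraints in $\Lambda$ to obtain the mass-conservation identity, and on verifying that the explicit point \eqref{E:optimala} is feasible with value $\delta_m\sum_{i=1}^M b_i$. The only cosmetic difference is that you get the matching lower bound directly from $a_{i,j}\ge 0$ and $\delta_j\ge\delta_m$ applied to the aggregate identity, whereas the paper first argues by separability that the mass in the columns $j\le M$ may be concentrated on the index $m$ and then applies the same summation.
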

\begin{proof}
Observe that if we fix $a_{i,j}$ for $j>M$, the constraints in $\Lambda$ have the form
\[
	\sum_{j=1}^M a_{i,j} = \text{constant}, \quad i=M+1,\dots,N,
\]
and we get that the problem is separable into $N-M$ minimization problems over a simplex.
This implies that the optimal solution can be attained by setting $a_{i,j}^\ast=0$
for all $j\in \{1,\dots,M\}\setminus\{m\}$ (i.e., for all indices except for an index for which $\delta_j$ is minimal).
Using this assignment,
the objective now reads
\[
\sum_{i=M+1}^N a_{i,m} \delta_m,
\]
and $\Lambda$ is reduced to (taking into account all variables):
\begin{align*}
		& - a_{i,m}-\sum_{j=M+1}^{i-1} a_{i,j} + \sum_{k=i+1}^N a_{k,i} - b_i = 0, \quad i=M+1,\dots,N-1, \\
		& 1 - a_{N,m}-\sum_{j=M+1}^{N-1} a_{N,j} - b_N = 0,\\
		& - 1 + \sum_{i=1}^N b_i = 0.
\end{align*}
Summing up the constraints in $\Lambda$, we get
\begin{align*}
	 \sum_{i=M+1}^N a_{i,m} &= -\sum_{i=M+1}^{N-1}\left(\sum_{j=M+1}^{i-1} a_{i,j} - \sum_{k=i+1}^N a_{k,i} \right) -\sum_{j=M+1}^{N-1} a_{N,j} +\sum_{i=1}^M b_i\\
	& = \sum_{i=M+1}^{N-1}\sum_{k=i+1}^N a_{k,i} -\sum_{i=M+1}^{N} \sum_{j=M+1}^{i-1} a_{i,j}  + \sum_{i=1}^M b_i
	 = \sum_{i=1}^M b_i,
\end{align*}
which means that the optimal value for the objective is $\sum_{i=1}^M b_i \delta_m$.
It is now straightforward to verify that the given solution~\eqref{E:optimala} is feasible and attains the optimal value of the problem,
hence the proof is complete.
\end{proof}

Invoking Lemma~\ref{L:optimala}, we can write problem \eqref{P:vb} in the following form:
\begin{equation}\label{reduce}
    \min_{b\in \Delta_{N}}\quad  \sum_{i=1}^M b_i (\langle g_i, x_i-x_0\rangle+\delta_m-\delta_i) + R\sqrt{\textstyle \|{\sum_{i=1}^M b_i  g_i}\|^2+ L^2 \sum_{i=M+1}^N b_i^2 }.
\end{equation}
To complete this step,
note that if $b^\ast$ is an optimal solution of the last convex problem
then optimality conditions imply that we must have $b_{M+1}^\ast=\dots=b_N^\ast$.
We can therefore assume, without affecting the optimal value of the problem, that $b_{M+1}=\dots=b_N$,
hence,
by introducing the variable $\beta = \sum_{i=M+1}^N b_i$,
we get
\begin{equation}\label{E:optimalb}
	b_{M+1}=\dots=b_N = \frac{\beta}{N-M},
\end{equation}
and hence
\[
	\sum_{i=M+1}^N b_i^2 = (N-M)b_N^2 = (N-M)\left(\frac{\beta}{N-M}\right)^2=\frac{\beta^2}{N-M}.
\]	
Therefore, using this in \eqref{reduce}, we have shown
\begin{proposition}\label{P:equiv} The convex SDP problem  $(P_M^{V})$ admits
the equivalent convex minimization formulation
\begin{align*}
    (P_M^{VI}) \quad \min_{(b_1,\dots,b_M,\beta)\in \Delta_{M+1}}\quad & \sum_{i=1}^M b_i (\langle x_i-x_0,g_i\rangle+\delta_m-\delta_i) + R\sqrt{\textstyle \|{\sum_{i=1}^M b_i  g_i}\|^2+ \frac{L^2 \beta^2}{N-M} },
\end{align*}
and we have $\val (P_M^{V})= \val(P_M^{VI})$.
\end{proposition}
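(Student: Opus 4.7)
The plan is to carry out a sequence of partial minimizations on $(P_M^V)$, peeling off one block of variables at a time in closed form, until only $(b_1,\dots,b_M,\beta)$ remain. Since these steps are exactly the narrative immediately preceding the proposition, my proof will essentially consist of justifying each peeling carefully and then handling the one genuinely new observation at the end.

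First I would address the innermost variables $\Phi_{i,i}$ and $c_i$. For each fixed $(a,b,d)$, the two families of $2\times 2$ PSD constraints in $(P_M^V)$ decouple completely, and each such constraint is of the form $\bigl(\begin{smallmatrix}d & \alpha \\ \alpha & z\end{smallmatrix}\bigr)\succeq 0$ with $\alpha$ already determined by $b$. The minimum of $z$ subject to this constraint is $\alpha^2/d$ (with the usual convention $0/0=0$ when $d=0$), giving exactly formulas \eqref{E:optimalphi} and \eqref{E:optimalc}. Summing the $\Phi_{i,i}^\ast$ contributions and using that $\{v_1,\dots,v_r\}$ is orthonormal spans $g_1,\dots,g_M$ — so that $\sum_j \bigl(\sum_i b_i\langle g_i,v_j\rangle\bigr)^2 = \bigl\|\sum_i b_i g_i\bigr\|^2$ — consolidates the objective into the form displayed just before \eqref{P:vb}.

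Next, for fixed $(a,b)$, the remaining function of $d\geq 0$ has the shape $R^2 d + C/(4d)$ with $C = L^2\sum_{i=M+1}^N b_i^2 + \|\sum_{i=1}^M b_i g_i\|^2\geq 0$, whose minimizer is \eqref{E:optimald} by a one-line AM–GM computation; plugging back yields problem \eqref{P:vb}. Then I would invoke Lemma~\ref{L:optimala} directly to eliminate $a$ at fixed $b\in\Delta_N$ (noting that the constraint $(a,b)\in\Lambda$ indeed forces $b\in\Delta_N$ via the equality $\sum_j b_j=1$), reducing the problem to \eqref{reduce}.

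The one step that deserves care — and which I expect to be the main (minor) obstacle — is the passage from \eqref{reduce} in the variables $b\in\Delta_N$ to the reduced problem over $\Delta_{M+1}$. For this I would argue as follows: in \eqref{reduce} the variables $b_{M+1},\dots,b_N$ appear only through $\sum_{i=M+1}^N b_i^2$ inside the square root, and the objective is symmetric and strictly convex in those coordinates. Fixing $b_1,\dots,b_M$ and writing $\beta := \sum_{i=M+1}^N b_i = 1-\sum_{i=1}^M b_i\geq 0$, the minimum of $\sum_{i=M+1}^N b_i^2$ subject to $b_i\geq 0$ and $\sum_{i=M+1}^N b_i=\beta$ is $\beta^2/(N-M)$, attained at $b_{M+1}=\dots=b_N=\beta/(N-M)$, as in \eqref{E:optimalb}. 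Substituting this value into \eqref{reduce} produces exactly the objective of $(P_M^{VI})$, and the feasible set becomes $(b_1,\dots,b_M,\beta)\in\Delta_{M+1}$. Since the substitution exhibits a feasible point of \eqref{reduce} attaining the value of $(P_M^{VI})$ and, conversely, any feasible point of $(P_M^{VI})$ lifts to a feasible point of \eqref{reduce} with the same objective value, we obtain $\val(P_M^{V}) = \val(P_M^{VI})$, completing the proof.
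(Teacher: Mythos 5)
Your proposal is correct and follows essentially the same route as the paper: closed-form elimination of $\Phi$ and $c$ via the $2\times 2$ minors, then of $d$, then of $a$ via Lemma~\ref{L:optimala}, and finally the reduction of the block $b_{M+1},\dots,b_N$ to the single variable $\beta$. Your only (welcome) divergence is cosmetic: where the paper appeals to ``optimality conditions'' to equalize $b_{M+1},\dots,b_N$, you justify the same step explicitly by minimizing $\sum_{i=M+1}^N b_i^2$ at fixed sum $\beta$, which yields $\beta^2/(N-M)$ and hence the same conclusion.
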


\subsection{Completing the Derivation of KLM}
We are now ready to complete the main goal of this section, namely the derivation of Algorithm KLM.
Indeed, as shown below, it turns out that the convex problem $(P_M^{VI})$ is nothing else but a dual representation of problem $(B_M)$
defined in Section~\ref{S:kelleyvariant}.
More precisely, we establish that strong duality holds for the pair of convex problems $(P_M^{VI})$--$(B_M)$.
Furthermore, as a by-product, we derive
the desired output of the method as described in Section~\ref{S:kelleyvariant}.
To prove this result, we first recall the following elementary fact.
\begin{lemma}\label{a:cs} Let $D \in \mathbb{S}^{l}_{++}, q \in \real^l$ and $R >0$ be given. Then,
\begin{equation}\label{cs}
\max_{u \in \real^l} \{ \langle q, u \rangle:\; u^TDu \leq R^2\}= R \| D^{-1/2}q\|\; \mbox{with optimal}\; u^*=R \frac{D^{-1}q}{\|D^{-1/2}q\|}.
\end{equation}
\end{lemma}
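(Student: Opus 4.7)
The plan is to reduce the problem to the Cauchy--Schwarz inequality through a change of variables that exploits the positive definiteness of $D$. Since $D \in \mathbb{S}^l_{++}$, the symmetric square root $D^{1/2}$ (and its inverse $D^{-1/2}$) is well-defined and invertible, so the substitution $w = D^{1/2} u$ is a bijection on $\mathbb{R}^l$, and the constraint $u^T D u \leq R^2$ becomes $\|w\|^2 \leq R^2$. Simultaneously, $\langle q, u\rangle = \langle q, D^{-1/2} w\rangle = \langle D^{-1/2} q, w\rangle$, so the problem turns into the standard maximization of a linear functional on a Euclidean ball.

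First I would apply Cauchy--Schwarz to conclude
\[
\langle D^{-1/2} q, w\rangle \;\leq\; \|D^{-1/2}q\|\cdot \|w\| \;\leq\; R\,\|D^{-1/2}q\|,
\]
which gives the claimed upper bound on the optimal value. Next I would verify that the candidate $u^\ast = R\, D^{-1}q / \|D^{-1/2}q\|$ both satisfies the constraint and attains this bound: a direct computation gives
\[
(u^\ast)^T D u^\ast \;=\; \frac{R^2\, q^T D^{-1} q}{\|D^{-1/2}q\|^2} \;=\; R^2,
\]
since $q^T D^{-1} q = \|D^{-1/2}q\|^2$, and
\[
\langle q, u^\ast\rangle \;=\; \frac{R\, q^T D^{-1} q}{\|D^{-1/2}q\|} \;=\; R\,\|D^{-1/2}q\|,
\]
matching the upper bound and completing the argument.

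There is essentially no obstacle here; the only minor point to address is the degenerate case $q=0$, for which the optimum is trivially $0$ and attained by $u=0$, so the formula for $u^\ast$ is applied under the implicit assumption $q\neq 0$ (and one may adopt the convention $0/0 = 0$ used elsewhere in the paper).
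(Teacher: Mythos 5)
Your proof is correct and takes essentially the same route as the paper, which simply invokes the Cauchy--Schwarz inequality; your substitution $w = D^{1/2}u$ and explicit verification that $u^\ast$ is feasible and attains the bound just fills in the details the paper leaves implicit. The remark about the degenerate case $q=0$ is a reasonable addition but not needed beyond what the paper assumes.
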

\begin{proof} The claim is an immediate consequence of Cauchy-Schwartz inequality and can also be derived by simple calculus. \end{proof}

The first main result of this section now follows.
\begin{proposition}\label{strongd}
Let $(B_M)$ be the problem defined in Section~\ref{S:kelleyvariant}, where for $M=0$, we take
\[
	(B_0)\quad \max_{y\in \mathbb{R}^p,\ \zeta \in \mathbb{R}} \left\{ L\zeta : \|y-x_0\|^2+N \zeta^2\leq R^2 \right\}.
\]
Then the pair of convex problems $(P_M^{VI})$--$(B_M)$ are dual to each other, and strong duality holds\footnote{Note that since both problems admit a compact feasible set, attainment of both values is warranted.}, i.e.,
$\val (P_M^{VI})=\val (B_M)$.
Moreover, given an optimal solution  $(b_1^\ast,\dots,b_M^\ast,\beta^\ast)$ for $(P_M^{VI})$, an optimal solution $(y^*,\zeta^*)$ for $(B_M)$ is recovered via
\begin{equation}\label{E:optimaly}
y^\ast = x_0 - \frac{1}{2d^\ast}\sum_{j=1}^M b_j^\ast g_j\; \mbox{ and }\;  \zeta^\ast = \frac{L\beta^\ast}{2(N-M)d^\ast},
\end{equation}
with 
\[
	d^\ast=\frac{\sqrt{\textstyle \|{\sum_{i=1}^M b_i^\ast g_i}\|^2+ \frac{L^2 (\beta^\ast)^2}{N-M} }}{2R}.
\]
\end{proposition}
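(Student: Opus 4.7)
The plan is to view $(P_M^{VI})$ as the Lagrangian dual of $(B_M)$. I would attach a nonnegative multiplier $b_i$ to each cut inequality $f(x_i)+\langle y-x_i,f'(x_i)\rangle\le t$ for $i=1,\dots,M$, a nonnegative multiplier $\beta$ to the inequality $f(x_m)-L\zeta\le t$, and keep the ball constraint $\|y-x_0\|^2+(N-M)\zeta^2\le R^2$ unrelaxed. Writing out the Lagrangian of this concave maximization, the variable $t$ appears linearly with coefficient $\sum_{i=1}^M b_i+\beta-1$, so the inner supremum over $t\in\mathbb{R}$ is finite iff $(b,\beta)\in\Delta_{M+1}$.

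Under the simplex normalization, a short rearrangement (using $\sum_i b_i+\beta=1$ to turn $\beta f(x_m)$ and $\sum_i b_i f(x_m)$ into the ``$\delta_m-\delta_i$'' pattern, and adding and subtracting $x_0$ inside $\langle x_i,g_i\rangle$) reduces the Lagrangian to
\[
\sum_{i=1}^M b_i\bigl(\delta_m-\delta_i+\langle x_i-x_0,g_i\rangle\bigr)\;-\;\Bigl\langle y-x_0,\sum_{i=1}^M b_i g_i\Bigr\rangle\;+\;\beta L\zeta.
\]
The remaining inner supremum is a linear function of $(y-x_0,\zeta)$ maximized over the ellipsoid $\|y-x_0\|^2+(N-M)\zeta^2\le R^2$. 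I would apply Lemma~\ref{a:cs} with $D=\operatorname{diag}(I_p,(N-M))$ and $q=\bigl(-\sum_i b_i g_i,\,\beta L\bigr)$, which yields precisely the square-root term $R\sqrt{\|\sum_i b_i g_i\|^2+L^2\beta^2/(N-M)}$. Collecting everything recovers the objective of $(P_M^{VI})$, and the dual feasible set is exactly $\Delta_{M+1}$; the degenerate case $M=0$ is handled identically, the first summation being empty and the quantity $f(x_m)$ disappearing via $\beta=1$.

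For strong duality I would invoke Slater's condition for $(B_M)$: the point $y=x_0$, $\zeta=0$, $t$ sufficiently large is strictly feasible, so the standard convex duality theorem applies and gives $\val(B_M)=\val(P_M^{VI})$. Both feasible sets are compact, so the values are attained. Finally, to recover the primal optimizer from an optimal $(b^\ast,\beta^\ast)$, I would read off the maximizer produced by Lemma~\ref{a:cs}: with $q^\ast=(-\sum_i b_i^\ast g_i,\beta^\ast L)$ and the ellipsoidal norm above, the optimal $(y^\ast-x_0,\sqrt{N-M}\,\zeta^\ast)$ equals $R\,D^{-1}q^\ast/\|D^{-1/2}q^\ast\|$. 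Substituting $\|D^{-1/2}q^\ast\|=2Rd^\ast$ from the stated formula for $d^\ast$ gives $y^\ast-x_0=-\frac{1}{2d^\ast}\sum_i b_i^\ast g_i$ and $\zeta^\ast=L\beta^\ast/(2(N-M)d^\ast)$, matching \eqref{E:optimaly}.

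The only place that needs care is the bookkeeping in step one: making sure the simplex condition drops out of the $t$-coefficient with the right sign, and that the ellipsoid metric is tracked correctly through Lemma~\ref{a:cs} (this is why the factor $\sqrt{N-M}$ has to be absorbed into $D$ rather than into $q$, lest the ratio $\beta^\ast L/(N-M)$ in $\zeta^\ast$ come out wrong). Beyond that the argument is routine convex duality.
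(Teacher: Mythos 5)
Your argument is correct, but it runs the duality in the opposite direction from the paper. You compute the Lagrangian dual of $(B_M)$ directly: multipliers $b_i\ge 0$ on the cuts, $\beta\ge 0$ on $f(x_m)-L\zeta\le t$, the ellipsoid kept in the domain, so that the coefficient of $t$ forces $(b,\beta)\in\Delta_{M+1}$, the inner supremum over $(y-x_0,\zeta)$ is evaluated by Lemma~\ref{a:cs}, and $(P_M^{VI})$ emerges as the dual; strong duality then follows from Slater's condition, which indeed holds ($y=x_0$, $\zeta=0$, $t$ large). The paper goes the other way: it starts from $(P_M^{VI})$, uses Lemma~\ref{a:cs} to re-express the square-root term as a maximization over the ellipsoid, obtaining a convex--concave min--max over $\Delta_{M+1}\times\{\|y-x_0\|^2+(N-M)\zeta^2\le R^2\}$, then swaps the operations by Fan's minimax theorem and uses $\min_{\alpha\in\Delta_l}\sum_i\alpha_i v_i=\min_i v_i$ to recognize the max--min as $(B_M)$. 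The two mechanisms are interchangeable here (compact simplex, affine dualized constraints), and both recover $(y^\ast,\zeta^\ast)$ from the same maximizer formula in Lemma~\ref{a:cs}; your route has the advantage of making explicit that $\beta^\ast$ is the multiplier of the constraint $f(x_m)-L\zeta\le t$, which is precisely how Algorithm~KLM uses it, while the paper's route avoids Slater by exhibiting the saddle-point structure directly. One bookkeeping remark: with $u=(y-x_0,\zeta)$ and $D=\diag(I_p,\,N-M)$ (your $D$ is the right one), Lemma~\ref{a:cs} identifies the maximizer as $(y^\ast-x_0,\zeta^\ast)=R\,D^{-1}q^\ast/\|D^{-1/2}q^\ast\|$ with no factor $\sqrt{N-M}$ on $\zeta^\ast$; the $\sqrt{N-M}$ in your displayed identification belongs to the whitened variable $D^{1/2}u$, which would instead pair with $D^{-1/2}q^\ast$. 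This is only an internal slip: the final expressions you obtain are the correct ones and agree with \eqref{E:optimaly}.
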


\begin{proof} Invoking Lemma~\ref{a:cs} with $u := (y-x_0,\zeta)$ and $q:=(-\sum_{i=1}^{M}b_ig_i, L\beta)$,
both in $\real^p \times \real$, and with the block diagonal matrix $D:= [I_p; (N-M)^{-1}] \in \mathbb{S}^{p+1}_{++}$, it easily follows that problem $(P_M^{VI})$ reads as the convex-concave minimax problem:
\begin{align*}
   V_*:= \min_{(b_1,\dots,b_M,\beta)\in \Delta_{M+1}} \max_{\|y-x_0\|^2+(N-M)\zeta^2\leq R^2}\  &  \sum_{i=1}^M b_i(\langle x_i-y,g_i\rangle+\delta_m-\delta_i) + \beta L \zeta.
\end{align*}
Applying the minimax theorem~\cite{fan1953minimax}, we can reverse the min-max operations, and hence by using the simple fact $\min_{\alpha \in \Delta_l}\sum_{i=1}^{l}\alpha_iv_i=\min_{1\leq i \leq l}v_i$ it follows that
\begin{align*}
   V_*= \max_{\|y-x_0\|^2+(N-M)\zeta^2\leq R^2} \min \  & \left\{ \delta_m-\delta_1+\langle x_1-y,g_1\rangle,\dots, \delta_m-\delta_M+\langle x_M-y,g_M\rangle, L\zeta\right\},
\end{align*}
which is an obvious equivalent reformulation of the problem $(B_M)$.
This establishes  the strong duality claim $\val (P_M^{VI})=\val (B_M)$.
Furthermore, if $(b^*, \beta^*)\in \Delta_{M+1}$ is optimal for $(P_M^{VI})$,  again thanks to Lemma \ref{a:cs},
(with $(q,u, D)$ as defined above), one immediately recovers an optimal solution $(y^*,\zeta^*)$ of $(B_M)$ as given in \eqref{E:optimaly}
and the proof is completed.
\end{proof}


As we now show, Proposition~\ref{strongd} paves the way to determine the iterative steps of Algorithm KLM.
For that purpose, we first derive an expression for $x_{M+1},\dots,x_N$ in terms an optimal solution $(b_1^\ast,\dots,b_M^\ast,\beta^\ast)$ for $(P_M^{VI})$.
First, recall that $(a^\ast, b^\ast,c^\ast,d^\ast,\xi^\ast,\psi^\ast,\Phi^\ast)$
with $a^\ast$, $b^\ast$, $c^\ast$, $\Phi^\ast_{i,i}$, $d^\ast$, $\xi^\ast$, $\psi^\ast$, and $\Phi^\ast$
defined according to \eqref{E:optimala}, \eqref{E:optimalb}, \eqref{E:optimalc}, \eqref{E:optimalphi}, and \eqref{E:optimalxipsi},
is optimal for $(P_M^{IV})$ and
satisfies the assumption~\eqref{E:nonzerolin}.
Thus, as a result of Lemma~\ref{L:delin}
and the definition of the sequence $x_i$ in~\eqref{E:mainalg},
the corresponding sequence $x_{M+1},\dots,x_N$
can be found via the rule
\begin{equation}\label{E:xstepiv}
	 x_i= x_0+\frac{1}{\sum_{j=1}^{i-1} a_{i,j}^\ast+ b_i^\ast} \left( \sum_{j=1}^{i-1} a_{i,j}^\ast (x_j-x_0)  - \sum_{j=1}^{r} \xi^\ast_{i,j} v_j - \sum_{j=M+1}^{i-1} \psi^\ast_{i,j} g_j\right).
\end{equation}
From definitions of $\xi^\ast$ and $\psi^\ast$ in~\eqref{E:optimalxipsi} we get that
\[
	\sum_{j=1}^r \xi_{i,j}^\ast v_j = \frac{b_i^\ast}{2d^\ast} \sum_{j=1}^r \sum_{k=1}^M b_k^\ast \langle g_k, v_j\rangle v_j = \frac{b_i^\ast}{2d^\ast}\sum_{k=1}^M b_k^\ast g_k,
\]
and
\[
	\sum_{j=1}^{r} \xi_{i,j}^\ast v_k + \sum_{j=M+1}^{i-1} \psi_{i,j}^\ast g_j = \frac{b_i^\ast}{2d^\ast}\sum_{j=1}^{i-1} b_j^\ast g_j,
\]
which, together with \eqref{E:xstepiv},
yields an expression for $x_i$ that is independent of $\xi_{i,j}^\ast$ and $\psi_{i,j}^\ast$:
\begin{equation}\label{E:eliminatedxi}
	 x_i= \frac{1}{\sum_{j=1}^{i-1} a_{i,j}^\ast+ b_i^\ast} \left( \sum_{j=1}^{i-1} a_{i,j}^\ast x_j+  b_i^\ast \left(x_0- \frac{1}{2d^\ast}\sum_{j=1}^{i-1} b_j^\ast g_j \right)\right), \quad i=M+1,\dots,N.
\end{equation}
Now, using the definition of $a^\ast$ from~\eqref{E:optimala}, we reach the expression
\begin{equation*}
	 x_i = \left\{
		\begin{aligned}
		&x_0- \frac{1}{2d^\ast}\sum_{j=1}^{i-1} b_j^\ast g_j,					& \hspace{-50pt}	i=M+1,\dots,N-1,\\
		&\sum_{j=1}^M b_j^\ast x_m + \sum_{j=M+1}^{N-1} b_j^\ast x_j +b_N^\ast \left(x_0- \frac{1}{2d^\ast}\sum_{j=1}^{N-1} b_j^\ast g_j \right), & i=N,
		\end{aligned}
		\right.
\end{equation*}
where
$m$ as in~\eqref{E:optimalm}.

This rule can be written in a more convenient form using a solution to the pair of convex problems $(P_M^{VI})$--$(B_M)$. For that, note that
by writing $x_i$ in terms of $x_{i-1}$,  breaking the computation of the last step, $x_N$ into two parts $x_N$ and $\bar x_N$,
and applying \eqref{E:optimaly} of Proposition~\ref{strongd}, we obtain
\begin{equation}\label{E:easyalg}
	\begin{aligned}
	 &x_i = \begin{cases}
		\displaystyle x_0- \frac{1}{2d^\ast}\sum_{j=1}^{M} b_j^\ast g_j = y^*,					& i=M+1,\\
		\displaystyle x_{i-1} - \frac{\beta^\ast}{2(N-M)d^\ast} g_{i-1}=x_{i-1}- \frac{\zeta^*}{L} g_{i-1}, & i=M+2,\dots,N,
		\end{cases}\\
	&\bar x_N = (1-\beta^\ast) x_m + \frac{\beta^\ast}{N-M} \sum_{j=M+1}^N x_j,
	\end{aligned}
\end{equation}
which is precisely the output of Algorithm~KLM
after performing a ``\hardstepname'' step followed by $N-M-1$ ``easy'' steps.

Note that for $M=0$
the analytical solution $\beta^\ast=1$, $d^\ast = \frac{L}{2R\sqrt{N}}$ and $\eta^\ast = \frac{R}{\sqrt{N}}$
can be easily established.
The calculation above then yields
\begin{equation}\label{E:initstepalg}
	\begin{aligned}
	 & x_i = \begin{cases}
		\displaystyle x_0,					& i=1,\\
		\displaystyle x_{i-1}- \frac{R}{L \sqrt{N}} g_{i-1}, & i=2,\dots,N,
		\end{cases} \quad
	 \bar x_N = \frac{1}{N} \sum_{j=M+1}^N x_j,
	\end{aligned}
\end{equation}
which is the output of Algorithm~KLM when no ``\hardstepname'' steps are taken.
As an immediate result we obtain that if Algorithm~KLM takes no ``\hardstepname'' steps
it achieves
\begin{equation}\label{E:easybound}
f(\bar x_N)-f^\ast \leq \val (P_{0}^{II}) = \val (P_{0}^{VI}) = \frac{LR}{\sqrt{N}},
\end{equation}
where the last equality follow by observing that $\beta^\ast=1$.

\section{The Rate of Convergence: Proof of Theorem~\ref{T:main}}\label{S:proofmt}
Before we proceed with the proof of Theorem~\ref{T:main}, we need the following lemma,
which establishes that the optimal value of $(P_M^{II})$ 
is non-increasing during the run of the method.
\begin{lemma}\label{L:descent}
Let $l\in \mathbb{N}$ be such that $M+l\leq N$ and
suppose $x_{M+1},\dots,x_{M+l}$ satisfy the recursion~\eqref{E:mainalg} with $h=\bar h$,
where $\bar h$ is optimal for the outer minimization problem in $(P_M^{II})$.
Then $\val (P_{M+l}^{II}) \leq \val (P_M^{II})$.
\end{lemma}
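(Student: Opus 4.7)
The goal is to exhibit step sizes $h''$ for the $(M{+}l)$-problem and establish
\[
\val(P_{M+l}^{II}) \le P_{M+l}^{II}(h'') \le P_M^{II}(\bar h) = \val(P_M^{II}).
\]
I will take $h''$ to be the tail of $\bar h$, i.e., $h''^{(i)}_{j,k}:=\bar h^{(i)}_{j,k}$ for every $i = M{+}l{+}1, \dots, N$ and all admissible $j,k$, so that from iteration $M{+}l{+}1$ onward the schedules $h''$ and $\bar h$ prescribe identical steps.

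The heart of the argument is a lifting: given any feasible $(X'', \delta'')$ of the inner maximization $P_{M+l}^{II}(h'')$, I plan to build a feasible $(X, \delta)$ of the inner maximization $P_M^{II}(\bar h)$ with identical objective $\delta_N - \delta_\ast = \delta_N'' - \delta_\ast''$. Since $X''\succeq 0$, view it as the Gram matrix of vectors $\{x_\ast - x_0,\ v_1'', \dots, v_{r'}'',\ g_{M+l+1}, \dots, g_N\}$ in some Hilbert space $H$, where the orthonormal system $\{v_1'',\dots,v_{r'}''\}$ is chosen to extend the $M$-problem's basis $\{v_1,\dots,v_r\}$. Then define $X$ to be the Gram matrix in $H$ of
\[
\{x_\ast - x_0,\ v_1, \dots, v_r,\ f'(x_{M+1}), \dots, f'(x_{M+l}),\ g_{M+l+1}, \dots, g_N\},
\]
i.e., I populate the ``abstract'' subgradient slots $\bg_j$, $j=M{+}1,\dots,M{+}l$, of the $M$-problem with the real data $f'(x_j)$; the inner products needed to do so are computable from $X''$ together with the problem data, using that $f'(x_j)\in\mathrm{span}(v_1'',\dots,v_{r'}'')$ for $j\le M{+}l$. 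Correspondingly set $\delta_j := f(x_j)$ for $M{<}j\le M{+}l$, $\delta_j := \delta_j''$ for $j > M{+}l$, and $\delta_\ast := \delta_\ast''$.

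Verifying feasibility of $(X,\delta)$ in $P_M^{II}(\bar h)$ should then reduce to a handful of routine checks. PSD-ness of $X$ and orthonormality of $\{v_1,\dots,v_r\}$ come from the Gram-matrix construction; $\|x_\ast - x_0\|\le R$ transfers directly from $X''$. The recursion in \eqref{D:boldletters} for $\bx_i$ under $\bar h$ holds for $i=M{+}1,\dots,M{+}l$ by the very hypothesis of the lemma (the actual iterates obey \eqref{E:mainalg} with $\bar h$ and $g_k = f'(x_k)$), and for $i > M{+}l$ it coincides with the recursion under $h''$ by the choice of $h''$. The Lipschitz bounds $\|g_i\|\le L$ hold for $i\le M{+}l$ because $f\in C_L$ and for $i > M{+}l$ by feasibility of $X''$. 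The constraints $\delta_i - \delta_\ast \le \bg_i^T X(\bx_i - \bx_\ast)$, $i = 1, \dots, N$, appear verbatim in both problems and transfer from $X''$. The only genuinely ``new'' constraints are the subgradient inequalities $\delta_i - \delta_j \le \langle g_i, x_i - x_j\rangle$ for $i = M{+}1, \dots, M{+}l$ and $j < i$, and these reduce to $f(x_i) - f(x_j) \le \langle f'(x_i), x_i - x_j\rangle$, which is just convexity of $f$ at its own data.

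The main obstacle I anticipate is the bookkeeping: the two problems use different orthonormal bases ($r$ vs.\ $r'$ vectors) and different free-subgradient slots, so a direct matrix-level comparison is awkward. The clean route, as sketched, is to treat both SDPs as Gram matrices of vectors in a common Hilbert space. The lemma's hypothesis that $x_{M+1},\dots,x_{M+l}$ were actually generated by the $\bar h$-method applied to $f$ is precisely what allows the $\bg_j$-slots of the $M$-problem to be populated by the real subgradients $f'(x_j)$ in a recursion-preserving way, without spoiling the subgradient, Lipschitz, or recursion constraints.
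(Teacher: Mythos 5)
Your proposal is correct and is essentially the paper's own argument: the paper likewise copies the tail of $\bar h$, lifts a point of the inner maximization of $(P_{M+l}^{II})$ to a feasible point of the inner maximization of $(P_{M}^{II})$ at $\bar h$ by setting $\bar X = V^T \hat X V$ (exactly your Gram-matrix lifting, with the two problems written w.l.o.g.\ in a common basis) and $\bar\delta_i = f(x_i)$ on the new indices, and it disposes of the newly appearing subgradient and Lipschitz constraints via convexity and Lipschitz continuity of $f$, just as you do, before chaining $\val(P_{M+l}^{II}) \le P_{M+l}^{II}(\hat h)= P_M^{II}(\bar h;\bar X,\bar\delta)\le \val(P_M^{II})$. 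The one bookkeeping point worth spelling out (the paper leaves it equally implicit) is that the $(M+l)$-problem has no coefficients $h^{(i)}_{3,k}$ for $k\le M+l$, so ``copying the tail of $\bar h$'' must be understood as absorbing the terms $\bar h^{(i)}_{3,k} f'(x_k)$, $M<k\le M+l$, into the $h_2$-part of $h''$, which is legitimate precisely because $f'(x_{M+1}),\dots,f'(x_{M+l})$ lie in the span of the $(M+l)$-problem's orthonormal set.
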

\begin{proof}
%
Denote by $\hat h$ the steps sizes in $\bar h$ which correspond to the last $N-M-l$ steps $x_{M+l+1},\dots,x_N$
(i.e., $\hat h^{(i)}_{j,k} = \bar h^{(i)}_{j,k}$ for $i=M+l+1,\dots,N$),
and let $(\hat X,\hat \delta)$ be optimal for
the inner maximization problem in $(P_{M+l}^{II})$ when fixing $h=\hat h$.
We proceed by constructing a matrix $\bar X$ and a vector $\bar \delta$ such that $(\bar h; \bar X, \bar \delta)$ is feasible to $(P_M^{II})$ and achieves the same objective value as $(\hat h; \hat X, \hat \delta)$
achieves for $(P_{M+l}^{II})$.

As we've seen in the previous section, the optimal value of $(P_{M}^{II})$
does not depend on the specific choice of $r$ or the set $\{v_1,\dots,v_r\}$,
we can therefore assume without loss of generality that both $(P_{M}^{II})$ and $(P_{M+l}^{II})$
are expressed using the same value of $r$ and the same set of vectors $v_i$ (e.g., we can choose $r=d$ and take $\{v_i\}$ as the canonical basis).

Denote by $\bar \bv_i$, $\bar \bg_i$ and $\bar \bx_i$ the vectors $\bv_i$, $\bg_i$ and $\bx_i$ as defined for $(P_M^{II})$ in \eqref{D:boldletters},
and let $\hat \bv_i$, $\hat \bg_i$ and $\hat \bx_i$ be the vectors $\bv_i$, $\bg_i$ and $\bx_i$ that correspond to $(P_{M+l}^{II})$, i.e.,
\begin{equation*}
\begin{aligned}
	& \bar \bv_i = e_{1+i}, \quad i=1,\dots,r,\\
	& \bar \bg_i = \begin{cases} \sum_{k=1}^r \langle g_i, v_k\rangle \bv_k, & i=1,\dots,M,\\
							e_{1+r+i-M}^T,  & i=M+1,\dots,N, \end{cases}\\
	& \bar \bx_i = \begin{cases} \sum_{k=1}^r \langle x_i-x_0, v_k\rangle \bv_k, &i=1,\dots,M, \\
							\sum_{k=1}^{i-1} h^{(i)}_{1,k} \bx_k - \sum_{k=1}^{r} h^{(i)}_{2,k} \bv_k - \sum_{k=M+1}^{i-1} h^{(i)}_{3,k} \bg_k, &i=M+1,\dots,N,\\
							e_1,				& i =\ast,
							\end{cases}
\end{aligned}
\end{equation*}
and
\begin{align*}
	& \hat \bv_i = e_{i+1}, \quad i=1,\dots,r,\\
	& \hat \bg_i = \begin{cases} \sum_{k=1}^r \langle g_i, v_k\rangle \hat\bv_k, & i=1,\dots,M+l,\\
							e_{1+r+i-M}^T,  & i=M+l+1,\dots,N, \end{cases}\\
	& \hat \bx_i = \begin{cases} \sum_{k=1}^r \langle x_i-x_0, v_k\rangle \hat\bv_k, &i=1,\dots,M+l, \\
							\sum_{k=1}^{i-1} h^{(i)}_{1,k} \hat\bx_k - \sum_{k=1}^{r} h^{(i)}_{2,k} \hat\bv_k - \sum_{k=M+1}^{i-1} h^{(i)}_{3,k} \hat\bg_k, &i=M+l+1,\dots,N,\\
							e_1						& i =\ast.
							\end{cases}
\end{align*}
Now, by taking $V$ as the $(1+r+N-M-l)\times(1+r+N-M)$ matrix
\[
	V=(\hat \bx_\ast,\hat \bv_1,\dots,\hat \bv_r,\hat \bg_{M+1},\dots,\hat \bg_N),
\]
it follows from the construction above that
\begin{align*}
	& \hat \bv_i = V \bar \bv_i, \quad i=1,\dots,r, \\
	& \hat \bg_i = V \bar \bg_i, \quad i=1,\dots,N, \\
	& \hat \bx_i = V \bar \bx_i, \quad i=1,\dots,N,\ast.
\end{align*}
Hence, by setting
\begin{align*}
	\bar X =& V^T \hat X V,\\
	\bar \delta_i = &
		\begin{cases}
			f(x_i),	& i=M+1,\dots,M+l,\\
			\hat \delta_i,	& i=M+l+1,\dots,N,\ast,
		\end{cases}
\end{align*}
we get that the equalities
\begin{align*}
	& \bar \bg_i^T \bar X \bar \bg_j = \hat \bg_i^T \hat X \hat \bg_j, \quad i,j=1,\dots,N, \\
	& \bar \bg_i^T \bar X \bar \bx_j = \hat \bg_i^T \hat X \hat \bx_j, \quad i=1,\dots,N,\ j=1,\dots,N,*.
\end{align*}
are satisfied, and therefore
$(\bar h; \bar X, \bar \delta)$ satisfies all the constraints in $(P_M^{II})$ that also appear in $(P_{M+l}^{II})$.
Note, however, that $(P_M^{II})$ includes some additional constraints that do not appear in $(P_{M+l}^{II})$, namely
\[
	\bar\delta_{i} -\bar\delta_j\leq  \bar\bg_i^T X (\bar\bx_i-\bar\bx_j),
\]
for $i=M+1,\dots,M+l-1$, and $j=1,\dots,i-1$, and
\[
	\bar\bg_i^T X \bar\bg_i \leq L^2,
\]
for $i=M+1,\dots,M+l-1$.
Nevertheless,
since for $i,j\leq M+l$ the values of $\bar\delta_{i}$, $\bar \bg_i^T \bar X \bar \bg_j$ and $\bar \bg_i^T \bar X \bar \bx_j$
originate from the convex function $f$, i.e.,
\begin{align*}
	& \bar \bg_i^T \bar X \bar \bg_j = \hat \bg_i^T \hat X \hat \bg_j=\langle f'(x_i), f'(x_j)\rangle, \quad i,j=1,\dots,M+l, \\
	& \bar \bg_i^T \bar X \bar \bx_j = \hat \bg_i^T \hat X \hat \bx_j=\langle f'(x_i), x_j\rangle, \quad i=1,\dots,M+l,\ j=1,\dots,M+l,
\end{align*}
we immediately get from the subgradient inequality
and the Lipschitz-continuity of $f$ that these additional constraints hold.
We conclude that
$(\bar h; \bar X, \bar \delta)$ is feasible for $(P_M^{II})$ and attains the same objective value
as does $(\hat h; \hat X, \hat \delta)$ for $(P_{M+l}^{II})$.

For a feasible point $(h; X, \delta)$, denote by $P_M^{II}(h; X, \delta)$ the value of the objective in $(P_M^{II})$ at the given point,
then we have just shown that $P_{M+l}^{II}(\hat h; \hat X, \hat \delta) = P_M^{II}(\bar h; \bar X, \bar \delta)$.
As an immediate consequence, we get
\[
	\val (P_{M+l}^{II}) \leq P_{M+l}^{II}(\hat h; \hat X, \hat \delta) = P_M^{II}(\bar h; \bar X, \bar \delta) \leq \val (P_M^{II}),
\]
where the first inequality follow since $(\hat X, \hat \delta)$ is optimal
for
the inner maximization problem in $(P_{M+l}^{II})$
and the last inequality follows since
$\bar h$ is optimal for the outer minimization problem in $(P_M^{II})$.
\end{proof}

We are now ready to give the proof of Theorem~\ref{T:main}.
\begin{proof}(Theorem~\ref{T:main}.)
We begin by considering the case where no ``\hardstepname'' step was taken. As noted at the end of the previous section,
the chosen parameters at the initialization of Algorithm~KLM correspond to the solution of the outer minimization problem in $(P_0^{II})$,
thereby giving the desired bound~\eqref{E:easybound}.

Now suppose that at least one ``\hardstepname'' step was taken.
Recalling that $s_k$ is the index of the last ``\hardstepname'' step, then
by the definition of the ``easy'' steps, the sequence
$x_{s_k+1},\dots,x_N,\bar x_N$ satisfies \eqref{E:easyalg}, where $y^\ast$, $\zeta^\ast$ and $\beta^\ast$ are
given by a solution of $(B_{s_k})$.
Let $\bar h$ be the vector of step sizes in \eqref{E:mainalg} that matches $x_{s_k+1},\dots,x_{N-1},\bar x_N$,
then by the construction of $(B_{s_k})$ from $(P_{s_k}^{II})$, we get that $\bar h$ is optimal for $(P_M^{II})$,
i.e., $\val (P_{s_k}^{II})=P_{s_k}^{II}(\bar h)$
(we use $P_{s_k}^{II}(\bar h)$ to denote the optimal value of the inner maximization problem in $(P_{s_k}^{II})$ with $h$ set to $\bar h$).
We therefore have
\[
	f(\bar x_N)-f^\ast \leq P_{s_k}(\bar h) \leq P_{s_k}^{II}(\bar h)=\val (P_{s_k}^{II}),
\]
where the two inequalities follow from the construction of $(P_{s_k}^{II})$.
By an immediate application of Lemma~\ref{L:descent} we get
\[
	f(\bar x_N)-f^\ast \leq \val (P^{II}_{s_k})\leq \dots \leq \val (P^{II}_{s_1}) \leq \val (P^{II}_{0}).
\]

Finally, since we have already established during the construction and analysis of Section~\ref{S:tracub} that
the series of relaxations and transformations preserve the optimal value of the problem,
i.e., $\val (P_{M}^{II})=\dots=\val (P_{M}^{VI})=\val (B_{M})$ for every $M$,
and since $\val (P^{II}_{0})=\val (P_{0}^{VI})={LR}/{\sqrt{N}}$,
the claim immediately follows.
\end{proof}


\section{A Numerical Illustration}\label{S:numerical}
In this section, we illustrate the potential benefit of the ``\hardstepname''~steps
by examining the behavior of the two extreme variants of Algorithm~KLM: the ``pure \hardstepname''
which takes a ``\hardstepname''~step at every iteration, and the ``pure easy'', which takes only ``easy'' steps.
Figure \ref{F:periteration} shows the absolute error, $f(\bar x_N)-f^*$, achieved by the two variants for various values of $N$
on the problem $\min_x \|Ax-b\|_\infty$, for $x\in \mathbb{R}^{100}$, where $A\in \real^{200\times 100}$ is a matrix whose entries
were randomly sampled from the uniform distribution on $[-1,1]$ and $b\in\real^{200}$ is a vector whose elements were taken from the same distribution.
The values of $L$ and $R$ provided to the variants were twice their exact values and
the exact solution was obtained using an interior point algorithm.

\begin{figure}[tp]
			\centerline{\includegraphics[height=64mm]{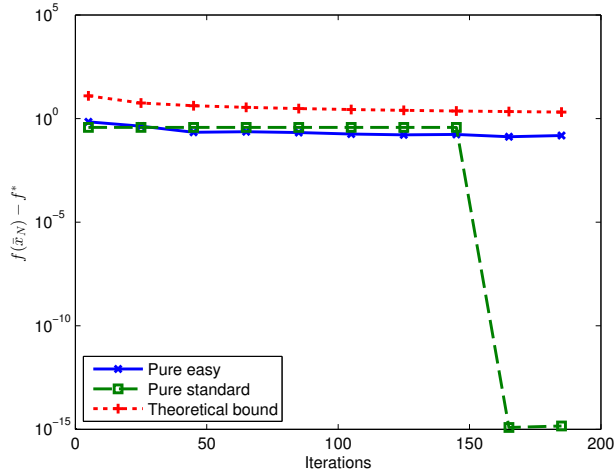}}
			\caption{The absolute error obtained by the ``pure easy'' and ``pure \hardstepname'' variants for different values of $N$.}
			\label{F:periteration}
\end{figure}


As can be seen from this example, although the worst-case guarantee for both variants is identical,
in some cases, the high accuracy obtained by the ``pure \hardstepname'' variant
compensates for the extra cost of its steps.



\section{Concluding Remarks}\label{S:concluding}
Through a constructive approach, we have derived  a new method for non-smooth convex minimization, which is
surprisingly similar to the Kelley method, yet it attains the optimal rate of convergence.
We conclude by briefly discussing how the construction derived in this work can be extended onto some other situations as well, which often arise in nonsmooth optimization schemes/models.

\paragraph{Knowledge of Lower Bound on $f^*$.}
When a lower bound, $\underline{f}$, on $f^\ast$ is known, (e.g., though a dual bound), the constraint
$\underline{f} \leq \varphi^\ast$ can be added to $(P_M)$ and the analysis can continue with only little change.
The resulting method turns out to be nearly the same as the method described above,
where the only change is the introduction of the constraint $\underline{f}\leq t$ to $(B_M)$.
Furthermore, the resulting efficiency estimate remains unchanged.

\paragraph{Inexact Subgradients.}
Another situation is the case where, instead of an exact subgradient, an $\epsilon$-subgradient
$f'(x)\in \partial_\epsilon f(x)$ is available for some given $\epsilon\geq 0$, i.e.,
for any $y$, instead of the usual subgradient inequality, we have
\[
	f(x) -f(y) \leq  \langle f'(x), x-y \rangle+\epsilon.
\]
The use of $\epsilon$-subgradients instead of exact subgradients has some practical advantages,
see e.g., \cite{auslender2004interior, oliveira2013bundle}
and references therein for motivating examples and for some recent work in this setting.
As in the previous case, only minor changes are needed in the analysis we developed,
and the resulting method turns out to be identical to the method presented in Section~\ref{S:kelleyvariant},
except for the first set of constraint in $(B_M)$, which becomes
\[
	f(x_i)+\langle y-x_i,f'(x_i)\rangle-\epsilon\leq t, \quad i=1,\dots,M,
\]
and for the efficiency estimate of the method~\eqref{E:minbound}, which turns out to be
\[
	f(\bar x_N)-f^\ast \leq \val (B_{s_k})+\epsilon\leq \dots \leq \val (B_{s_1})+\epsilon \leq {LR}/{\sqrt{N}}+\epsilon.
\]

\section*{Acknowledgements}
We thank the two referees and the associate editor for their constructive comments and useful suggestions.

\appendix

\section{Appendix: a Tight Lower-Complexity Bound}\label{S:appendix}
In this appendix, we refine the proof from \cite[Section 3.2]{nest-book-04} to obtain a new lower-complexity bound on
the class of nonsmooth, convex, and Lipschitz-continuous functions,
which together with the results discussed above
form a \emph{tight} complexity result for this class of problems.
More precisely, under the setting of \S\ref{SS:klm}, we show that for any first-order method, the worst-case
absolute inaccuracy after $N$ steps cannot be better than $\frac{LR}{\sqrt{N}}$,
which is exactly the bound attained by Algorithm~KLM.

In order to simplify the presentation, and
following \cite[Section 3.2]{nest-book-04}, we restrict our attention to
first-order methods that generate 
sequences that satisfy the following assumption:
\begin{assumption}\label{A:linear}
The sequence $\{x_i\}$ satisfies
\[
	x_i  \in x_1 + \mathrm{span}\{f'(x_1),\dots,f'(x_{i-1})\},
\]
where $f'(x_i)\in \partial f(x_i)$ is obtained by evaluating a first-order oracle at $x_i$.
\end{assumption}
As noted by Nesterov~\cite[Page 59]{nest-book-04}, this assumption is not necessary and can be avoided by some additional reasoning.

The lower-complexity result is stated as follows.
\begin{theorem}
For any $L,R>0$, $N,p\in\mathbb{N}$ with $N\leq p$, and any starting point $x_1\in \mathbb{R}^p$,
there exists a convex and Lipschitz-continuous function $f:\mathbb{R}^p\rightarrow \mathbb{R}$
with Lipschitz constant $L$ and $\|x^\ast_f-x_1\|\leq R$,
and a first-order oracle $\mathcal{O}(x)= (f(x), f'(x))$,
such that
\[
	f(x_N)-f^\ast \geq \frac{LR}{\sqrt{N}}
\]
for all sequences $x_1,\dots,x_N$ 
that satisfies Assumption~\ref{A:linear}.
\end{theorem}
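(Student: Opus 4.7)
My plan is to follow the classical worst-function strategy of Nemirovski and Yudin, tracking the constants so as to recover the sharp bound $LR/\sqrt{N}$. By translating I may assume $x_1 = 0$, and I work in the first $N$ coordinates of $\mathbb{R}^p$. The adversarial function is
\[
    f(x) := \max\!\left\{-\tfrac{LR}{\sqrt{N}},\; L \max_{1 \leq i \leq N} x^{(i)}\right\}, \qquad x^{(i)} := \langle e_i, x\rangle,
\]
which is convex, globally $L$-Lipschitz on $\mathbb{R}^p$, bounded below by $f^\ast = -LR/\sqrt{N}$, and attains its minimum at $x^\ast := -\frac{R}{\sqrt{N}}\sum_{i=1}^N e_i$ (which has $\|x^\ast - x_1\| = R$). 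To $f$ I attach a \emph{resisting} first-order oracle that, at any query $x$ with $\max_i x^{(i)} > -R/\sqrt{N}$, returns $L e_{j(x)}$, where $j(x) := \min\{j : x^{(j)} = \max_{1 \leq i \leq N} x^{(i)}\}$ is the smallest maximizing index; this is a valid element of $\partial f(x) = L\,\mathrm{conv}\{e_j : j \in I(x)\}$.

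The core step is the inductive invariant $x_k \in \mathrm{span}\{e_1,\dots,e_{k-1}\}$ for every $k = 1,\dots,N$. The base case $k = 1$ is immediate since $x_1 = 0$. For the inductive step, if $x_k \in \mathrm{span}\{e_1,\dots,e_{k-1}\}$, then the coordinates $x_k^{(k)},\dots,x_k^{(N)}$ all vanish, so $\max_i x_k^{(i)} \geq 0 > -R/\sqrt{N}$ and the maximum is attained, in particular, at every index $\geq k$. Consequently $j(x_k) \leq k$, the oracle's subgradient lies in $\mathrm{span}\{e_1,\dots,e_k\}$, and Assumption~\ref{A:linear} then places $x_{k+1}$ in $\mathrm{span}\{e_1,\dots,e_k\}$.

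Applying the invariant at $k = N$ yields $x_N^{(N)} = 0$ and therefore $f(x_N) = L \max_i x_N^{(i)} \geq L\,x_N^{(N)} = 0$ (the floor value $-LR/\sqrt{N}$ is never active at the iterates), so
\[
    f(x_N) - f^\ast \geq 0 - \left(-\tfrac{LR}{\sqrt{N}}\right) = \frac{LR}{\sqrt{N}},
\]
as required.

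The main obstacle I anticipate is producing a single function on $\mathbb{R}^p$ that simultaneously (i) is convex and globally $L$-Lipschitz, (ii) has its minimum equal to $-LR/\sqrt{N}$ attained within distance $R$ of $x_1$, and (iii) supports a resisting oracle that keeps the subgradient-span invariant intact along the algorithm's iterates. The floored-maximum construction above achieves all three: the outer cap $\max\{-LR/\sqrt{N},\,\cdot\}$ fixes (ii) without disturbing the Lipschitz constant, while in the region $\max_i x^{(i)} > -R/\sqrt{N}$ (which contains every iterate, by the invariant) the subgradient structure coincides with that of the bare maximum $L\max_i x^{(i)}$, preserving (iii).
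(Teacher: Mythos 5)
Your proposal is correct and follows essentially the same resisting-oracle argument as the paper's Appendix~\ref{S:appendix}: the smallest-maximizing-index subgradient selection, the induction showing $x_k\in\mathrm{span}\{e_1,\dots,e_{k-1}\}$, and the conclusion $f(x_N)\geq 0 = f^\ast + LR/\sqrt{N}$; the only difference is that you cap the function below with the constant $-LR/\sqrt{N}$ where the paper uses the term $L(\|x\|-R(1+N^{-1/2}))$, which changes nothing essential since both yield $f^\ast=-LR/\sqrt{N}$ attained at distance $R$ from $x_1$. One small imprecision: when some coordinate of $x_k$ with index $<k$ is positive, the maximum is \emph{not} attained at every index $\geq k$ as you state, but in that case it is attained at an index $<k$, so the needed conclusion $j(x_k)\leq k$ holds in all cases and the induction is unaffected.
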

\begin{proof}
The proof proceeds by constructing a ``worst-case'' function, on which
any first-order method that satisfies Assumption~\ref{A:linear} will not be able to improve its initial
objective value during the first $N$ iterations.

Let $f_N:\mathbb{R}^p\rightarrow \mathbb{R}$ and $\bar f_N:\mathbb{R}^p\rightarrow \mathbb{R}$
be defined by
\begin{align*}
	&f_N(x) = \max_{1\leq i \leq N} \langle x, e_i\rangle,  \\
	& \bar f_N(x) =L\max(f_N(x), \|x\|-R(1+N^{-1/2})),
\end{align*}
then it is easy to verify that $\bar f_N$ is Lipschitz-continuous with constant $L$ and that
\[
	\bar f_N^\ast = -\frac{LR}{\sqrt{N}}
\]
is attained for $x^\ast\in \mathbb{R}^p$ such that
\[
	x^\ast = -\frac{R}{\sqrt{N}}\sum_{i=1}^N e_i.
\]
We equip $\bar f_N$ with the oracle $\mathcal{O}_N(x)= (\bar f_N(x), \bar f'_N(x))$ by choosing
$\bar f'_N(x)\in \partial \bar f_N(x)$ according to:
\begin{equation}\label{E:resistingb}
	\bar f'_N(x) = \begin{cases}
			L f'_N(x),	& f_N(x)\geq \|x\|-R(1+N^{-1/2}),\\
			L\frac{x}{\|x\|},	& f_N(x)< \|x\|-R(1+N^{-1/2}),
		\end{cases}
\end{equation}
where
\begin{equation}\label{E:resisting}
	f'_N(x) = e_{i^\ast}, \quad i^\ast = \min \{ i : f_N(x)=\langle x, e_i\rangle \}.
\end{equation}
We also denote
\[
	\mathbb{R}^{i,p} := \{x\in\mathbb{R}^d : \langle x, e_j\rangle=0,\ i+1\leq j\leq p\}.
\]

Now, let $x_1,\dots,x_N$ be a sequence that satisfies Assumption~\ref{A:linear} with $f=\bar f_N$ and the oracle $\mathcal{O}_N$,
where without loss of generality we assume $x_1=0$.
Then $\bar f'_N(x_1) = e_1$ and we get $x_2\in\mathrm{span}\{\bar f'_N(x_1)\}=\mathbb{R}^{1,p}$.
Now, from $\langle x_2,e_2\rangle=\dots=\langle x_2,e_N\rangle=0$,
we get that $\min \{ i : f_N(x)=\langle x, e_i\rangle \}\leq 2$
and it follows by~\eqref{E:resistingb} and~\eqref{E:resisting} that $f'_N(x_2)\in \mathbb{R}^{2,p}$
and $\bar f'_N(x_2)\in \mathbb{R}^{2,p}$.
Hence, we conclude from Assumption~\ref{A:linear} that $x_3 \in \mathrm{span}\{\bar f'_N(x_1),\bar f'_N(x_2)\}\subseteq \mathbb{R}^{2,p}$.
It is straightforward to continue this argument to show that
$x_i \in \mathbb{R}^{i-1,p}$ and $\bar f'_N(x_i)\in \mathbb{R}^{i,p}$ for $i=1,\dots,N$, thus $x_N \in \mathbb{R}^{N-1,p}$.
Finally,
since for every $x\in \mathbb{R}^{N-1,p}$ we have $\bar f_N(x)\geq \langle x,e_N\rangle =0$,
we immediately get
\[
	\bar f_N(x_{N})-\bar f_N^\ast \geq \frac{LR}{\sqrt{N}},
\]
which completes the proof.
\end{proof}


\bibliographystyle{abbrv}
\bibliography{bib}

\end{document}